\numberwithin{equation}{section} 
\newtheorem{theorem}{Theorem}[section]
\newtheorem{proposition}[theorem]{Proposition}
\newtheorem{lemma}[theorem]{Lemma}
\newtheorem{corollary}[theorem]{Corollary}
\newtheorem{defi}[theorem]{Definition}
\theoremstyle{remark}
\newtheorem{remark}[theorem]{Remark}
\newcommand{\R}{\mathbb{R}}
\newcommand{\disp}{\displaystyle}
\newcommand{\ba}{\begin{array}}
\newcommand{\ea}{\end{array}}
\newcommand{\bthm}{\begin{theorem}}
\newcommand{\ethm}{\end{theorem}}
\newcommand{\bprop}{\begin{proposition}}
\newcommand{\eprop}{\end{proposition}}
\newcommand{\blemma}{\begin{lemma}}
\newcommand{\elemma}{\end{lemma}}
\newcommand{\beqn}{\begin{equation}}
\newcommand{\eeqn}{\end{equation}}
\newcommand{\beqns}{\begin{equation*}}
\newcommand{\eeqns}{\end{equation*}}
\newcommand{\supp}{\operatorname{supp}}
\newcommand{\pr}{\prime}
\newcommand{\pt}{\partial}
\newcommand{\arrow}{\rightarrow}
\renewcommand{\leq}{\leqslant}
\renewcommand{\geq}{\geqslant}
\definecolor{mygreen}{rgb}{0.1,0.75,0.2}
\newcommand{\eps}{\epsilon}
\newcommand{\E}{\mathsf{E}}
\newcommand{\Ea}{\mathsf{E}^a}
\newcommand{\Er}{\mathsf{E}^r}
\newcommand{\F}{\mathsf{F}}
\newcommand{\Rd}{{\mathord{\mathbb R}^d}}
\newcommand{\loc}{{\rm loc}}
\newcommand{\C}{\mathcal{C}}
\def\P{{\mathcal P}}
\def\S{{\mathcal S}}
\title[Regularized Nonlocal Interaction Energies]{Convergence of Regularized Nonlocal Interaction Energies}
\author{Katy Craig}
\address{Department of Mathematics, University of California, Santa Barbara, CA}
\email{kcraig@math.ucsb.edu}
\author{Ihsan Topaloglu}
\address{Department of Mathematics and Statistics, McMaster University, Hamilton, ON}
\email{itopalog@math.mcmaster.ca}
\thanks{KC's research is supported by U.S. National Science Foundation grant DMS 1401867 and the UC President's Postdoctoral Fellowship. IT's research is supported by a Fields--Ontario Postdoctoral Fellowship.}
\date{\today}                                        
\subjclass{49J45, 82B21, 82B05, 35R09, 45K05}
\keywords{global minimizers, pair potentials, aggregation, vortex blob method, Wasserstein metric, gradient flow, $\Gamma$-convergence, Coulomb repulsion}
\begin{document}

\begin{abstract}
Inspired by numerical studies of the aggregation equation, we study the effect of regularization on nonlocal interaction energies. We consider energies defined via a repulsive-attractive interaction kernel, regularized by convolution with a mollifier. We prove that, with respect to the 2-Wasserstein metric, the regularized energies $\Gamma$-converge to the unregularized energy and minimizers converge to minimizers. We then apply our results to prove $\Gamma$-convergence of the gradient flows, when restricted to the space of measures with bounded density.\end{abstract}

\maketitle

%%%%%%%%%%%%%%%%%%%%%%%%%%%%%%%%%%%%%%%%%%%%%%%%%%%%%%%%%%%%%%%%%%%%%%%%%%%%%%%%%%%%%%%%%%%%%%%%%%%%%%%% INTRODUCTION
%%%%%%%%%%%%%%%%%%%%%%%%%%%%%%%%%%%%%%%%%%%%%%%%%%%%%%%%%%%%%%%%%%%%%%%%%%%%%%%%%%%%%%%%%%%%%%%%%%%
\section{Introduction}\label{sec:intro} 

We consider the nonlocal interaction energy
	\beqn \label{interactionenergy} 
		\E(\mu) = \int_{\Rd}\!\int_{\Rd} K(x-y)\,d\mu(x) d\mu(y),
	\eeqn
over the space $\P_2(\Rd)$ of probability measures with finite second moment, where the pairwise interaction kernel $K: \Rd \to \R\cup\{+\infty\}$ is an even, locally integrable, and lower semicontinuous function. Depending on the choice of $K$, the asymptotic states of many physical and biological systems can be characterized as minimizers of this energy. Of particular interest are interaction kernels which are repulsive at short distances and attractive at long distances. Important examples of such repulsive-attractive kernels are Morse-type potentials
\beqn
\begin{gathered} \label{repulsive attractive morse}
K(x) = C_r e^{-|x|/l_r} - C_a c^{-|x|/l_a}\\
\text{with } C_r/C_a < \left( l_r/l_a \right)^{-d},\ 0< l_r < l_a, \text{ and } 0<C_a<C_r,
\end{gathered}
\eeqn
and potentials in the power-law form
\beqn\label{repulsive attractive power law}
K(x) = |x|^q/q - |x|^p/p \quad \text{with} \quad -d<p<q,
\eeqn
which arise in models of granular media \cite{BenedettoCagliotiCarrilloPulvirenti, LiToscani, CarrilloMcCannVillani, CarrilloMcCannVillani2}, molecular self-assembly \cite{DoyeWalesBerry, Wales, RechtsmanStillingerTorquato}, biological swarming \cite{TopazBertozzi2, BernoffTopaz}, and the distributions of eigenvalues for Gaussian random matrices \cite{PetzHiai, ChafaiGozlanZitt}.

Due to the competition between the repulsive and attractive terms, the minimization of these energies leads to complex equilibrium configurations \cite{BalagueCarrilloLaurentRaoul,Bertozzietal_RingPatterns, BertozziLaurentLeger, BertozziGarnettLaurent, BertozziBrandman, BertozziCarrilloLaurent, Dong, FellnerRaoul2, FetecauHuangKolokolnikov, FetecauHuang, HuangBertozzi2, Poupaud, SunUminskyBertozzi, TopazBertozzi1}. In the case of repulsive-attractive power law kernels (\ref{repulsive attractive power law}), local minimizers  exhibit a variety of qualitatively different patterns, from solid rings to broken, rounded triangles \cite{Kolokolnikovetal_StabilityRingPatterns}, and the dimension of the minimizers' support can be characterized in terms of the strength of the repulsive forces \cite{BalagueCarrilloLaurentRaoul_Dimensionality}.

In addition to qualitative properties of local minimizers, there has also been significant interest in the existence and uniqueness of global minimizers. (Since (\ref{interactionenergy}) is translation invariant, minimizers are only unique up to translation.) Existence was recently established for a range of kernels, including (\ref{repulsive attractive power law}), by Simione, Slep\v{c}ev, and the second author \cite{SiSlTo2014} and by Ca\~nizo, Carrillo, and Patacchini \cite{CCP}. On the other hand, characterizing which kernels have a unique global minimizer is essentially open, due to lack of convexity.  In the particular case of Coulomb repulsion and quadratic attraction (when the energy is convex with respect to $H^{-1}$) the unique global minimizer is the indicator function on a ball (c.f. for example \cite{CDM14, ChFeTo14}).

Alongside the static problem of characterizing the minimizers of the interaction energy, there has also been significant interest in the dynamic problem of understanding the behavior of systems as they evolve toward a local minimizer. Such systems arise in biological swarming \cite{MogilnerEdelstein,MogilnerEdelsteinBent, TopazBertozzi1}, robotic swarming \cite{ChuangHuangDorsognaBertozzi, PereaGomezElosegui}, and assembly of viral capsid proteins \cite{HaganChandler}, and they may be modeled as \emph{gradient flows} of the energy with respect to the 2-\emph{Wasserstein metric}. Formally, this corresponds to the nonlinear, nonlocal partial differential equation 
\begin{align} \label{agg eqn}
\begin{cases}
\rho_t + \nabla \cdot (v \rho) = 0  \quad \text{with} \quad v = - 2 \nabla K * \rho, \\
\rho(x,0) = \rho_0(x),
\end{cases}
\end{align}
known as the \emph{aggregation equation}. (We choose to put a factor of two in the velocity field instead of putting a factor of one half on the energy (\ref{interactionenergy}).) For semi-convex interaction kernels $K$, with up to a Lipschitz singularity, weak measures solutions to  (\ref{agg eqn}) exist for all time and are unique \cite{AGS, 5person}. If the kernels are sufficiently convex,  solutions converge exponentially to a unique global minimizer \cite{CarrilloMcCannVillani}.  However, for nonconvex kernels with merely integrable singularities, much less is known about the evolution of measure solutions and their asymptotic behavior. In the particular case of Coulomb repulsion and quadratic attraction, solutions with bounded, continuous initial data converge to the unique global minimizer algebraically in time \cite{BertozziLaurentLeger}.

Due to the analytical difficulties repulsive-attractive kernels present, both theoretical and applied work is often complemented by numerical simulations. The most common method for simulating (\ref{agg eqn}) is a particle method, in which one approximates the initial data by a sum of Dirac masses, $\rho_0 \approx \sum_{i=1}^N \delta_{x_i} m_i$, where $x_i$ is the location of the Dirac mass and $m_i \geq 0$ is its weight. The corresponding solutions of (\ref{agg eqn}) are formally of the form $\rho \approx \sum_{i=1}^N \delta_{X_i(t)} m_i$, where the trajectories of the Dirac masses $X_i(t)$ satisfy the following finite system of ODEs:
\begin{align} \label{particleODEsystem}
 \frac{d}{dt} X_i(t) = - 2 \sum_{j=1}^N \nabla K(X_i(t) - X_j(t)) m_i , \quad X_i(0) = x_i.
 \end{align}
  For a range of interaction kernels, including repulsive-attractive power-laws (\ref{repulsive attractive power law}) in the parameter regime $2-d< p \leq 2, \  q>0$, Carrillo, Choi, and Hauray prove the convergence of the particle method solution $\sum_{i=1}^N \delta_{X_i(t)} m_i$ to weak measure solutions of the aggregation equation  \cite{CarrilloChoiHauray}.
  
  In recent work, Bertozzi and the first author consider a modification of these types of particle methods \cite{CrBe14}, analogous to classical vortex blob methods from fluid dynamics (c.f. \cite{BealeMajda1,BealeMajda2,AndersonGreengard} and references therein). Specifically, since the gradient of the kernel $\nabla K$ may be singular at the origin, they regularize the interaction kernel by convolution with a mollifier, leading to an approximate solution of the form $\sum_{i=1}^N \delta_{X^\epsilon_i(t)} m_i$, where $X^\epsilon_i(t)$ satisfies (\ref{particleODEsystem}) with $\nabla K$ replaced by $\nabla (K*\varphi_\epsilon)$. By regularizing the interaction kernel $K$, the authors extend particle methods to a wide range of interaction kernels, including the power-law kernels (\ref{repulsive attractive power law}) for $2-d\leq p \leq q$,  and obtain quantitative rates of convergence to classical solutions. While this convergence result is limited to bounded time intervals, numerical results indicate that regularizing the energy in this way may also be useful in studying asymptotic behavior (see Figure \ref{fig:only_one}).
\begin{figure}[htbp]
\begin{center}
\includegraphics[trim={0cm -.07cm 0cm 0cm},clip,width=.24\textwidth]{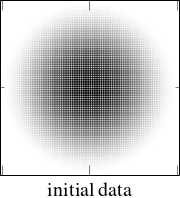} 
\includegraphics[trim={0cm 0cm 0cm 0cm},clip,width=.24\textwidth]{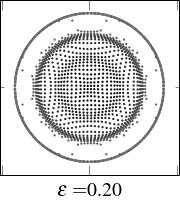} 
\includegraphics[trim={0cm 0cm 0cm .02cm},clip,width=.24\textwidth]{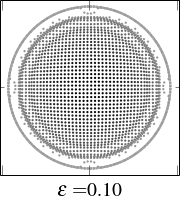} 
\includegraphics[trim={0cm -.08cm 0cm 0cm},clip,width=.24\textwidth]{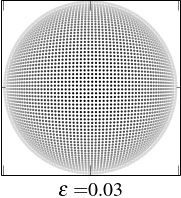}

\caption{Numerical solutions of (\ref{agg eqn}) at $t=100$ for Coulomb repulsion and quadratic attraction in two dimensions. The initial data is the function $\rho_0(x,y) = C(1-x^2-y^2)_+^2$, 
with $C$ chosen so $\rho_0$ has mass one.
Solutions are generated by the blob method \cite{CrBe14} with $\epsilon = 0.20, 0.10,$ and $0.03$ on the square $[-1,1]\times[-1,1]$, which is discretized on a $80\times 80$ grid. As $\epsilon\to 0$, the support of the numerical solution spreads evenly throughout the ball. This suggests that, in the limit, the solution converges to the indicator function on the ball,  the unique steady state of the PDE.}
\label{fig:only_one}
\end{center}
\end{figure}

Inspired by these results, we consider a regularization of the nonlocal interaction energy  analogous to Bertozzi and the first author's blob method. 
Specifically, given a smooth, radial, rapidly decreasing mollifier $\varphi$, we define the regularized kernel
		\[ 
			K_\epsilon := \varphi_\epsilon * K *\varphi_\epsilon,
		\]
with $\varphi_\epsilon(x) := \epsilon^{-d} \varphi(x/\epsilon)$, and consider the regularized interaction energy
  \beqn
		\label{reginteractionenergy} 
			\E_\epsilon(\mu) := \int_{\Rd}\!\int_{\Rd} K_\epsilon(x-y)\,d\mu(x) d\mu(y).
	\eeqn
We show that, for a range of repulsive-attractive kernels, including repulsive-attractive power-law kernels (\ref{repulsive attractive power law}) with $2-d \leq p< 0< q \leq 2$,
the regularized energies $\E_\epsilon$ $\Gamma$-converge to $\E$ with respect to the 2-Wasserstein metric. Then, via a compactness argument, we show that, up to a subsequence, the minimizers of $\E_\epsilon$ converge to a minimizer of $\E$ as $\epsilon \to 0$. This provides a method of approximating minimizers of $\E$ by minimizers of energies with superior convexity and regularity properties (see Remarks \ref{rem:lambdaconv} and \ref{rem:differentiability}). It also demonstrates a type of continuity among minimizers of $\E_\epsilon$, in spite of their vastly different regularity properties, as illustrated in Figure \ref{fig:only_one}. Finally, it imparts further theoretical justification for the success of the Bertozzi and the first author's numerical blob method. Our work builds on previous work of Fellner and Raoul \cite{FellnerRaoul2}, who considered regularization of the Coulomb repulsion in one dimension, showing that, in the presence of a confining potential, Dirac-type stationary states converge to a unique $L^\infty$ stationary state.

We conclude by considering the convergence of the corresponding gradient flows for repulsive-attractive power-law kernels \eqref{repulsive attractive power law} in the regime $2-d \leq p< 0< q \leq 2$. Indeed, using the abstract scheme of Serfaty \cite{Serfaty}, we show that as $\eps\to 0$ gradient flows of $\E_\epsilon$ that are bounded in $L^\infty(\Rd)$ $\Gamma$-converge to a generalized notion of gradient flow for $\E$, that is to its \emph{curve of maximal slope} on the metric space of probability measures with bounded density. This provides a link between the well-understood case of gradient flows of semi-convex energies $\E_\epsilon$ and the curve of maximal slope of the unregularized energy $\E$. It also provides a first step in understanding the connection between the gradient flows of unregularized and noncovex interaction energies $\E$ and the aggregation equation, via a singular perturbation approach (see Remark \ref{rem:grad_flow_agg_eqn}).

These results provide many directions for future work. A natural extension would be to consider the effect of simultaneously discretizing and regularizing the energy and to  study the distinguished limits as the discretization and regularization are removed. Another interesting direction would be to seek quantitative estimates on the rate of convergence of minimizers and gradient flows as a regularization is removed. Our present work strongly leverages compactness arguments, and we believe that a very different approach would be needed in order to provide quantitative estimates.

\medskip

\begin{remark}[Power-law kernels and generalizations]\label{rem:powerlaw_generalization} 
{\rm 
In sections \ref{sec:prelim} and \ref{sec:conv}, we choose to state our results for interaction kernels in the power-law form \eqref{repulsive attractive power law}, not only because these kernels have attracted much interest in connection with the aggregation equation (cf. \cite{BertozziLaurentRosado,Dong,FetecauHuang,FetecauHuangKolokolnikov,YaoBertozzi}) and nonlocal interaction energies (cf. \cite{CaChHu,CDM14,ChFeTo14}), but also because these kernels capture the crucial behavior of short-range and long-range interactions, i.e., at the origin and at infinity. Perhaps one of the most important properties of such kernels, which we exploit in our analysis, is that the attractive and repulsive parts of the kernel are given separately by relatively simple potentials. This allows us to quantitatively compare the attractive and repulsive parts of the energy. Indeed, it is due to the comparison (or rather competition) between these two terms that minimization of such energies exhibit energy-driven pattern formation.

Our analysis depends mostly on the behavior of $K$ for very large and very small values of $|x|$; hence, there are many possible ways of extending our results in sections \ref{sec:prelim} and \ref{sec:conv} to a wider class of kernels. In section \ref{sec:generalization}, we provide general conditions on the regularity, growth at infinity, and behavior around singularities of $K$ for which our results continue to hold. In particular, our results hold in any dimension $d \geq 1$ for kernels that exhibit power-law repulsion at the origin and power-law attraction at infinity, including logarithmic repulsion in $d \geq 2$.
}
\end{remark}

\medskip

\begin{remark}[Choices of parameters in power-law kernels] \label{rem:powerlawchoices}
{\rm 
In sections \ref{sec:prelim} and \ref{sec:conv}, we restrict our attention to repulsive-attractive power law kernels satisfying (\ref{repulsive attractive power law}) with $2-d \leq p < 0 < q\leq 2$ in dimension $d \geq 3$. We require $q \leq 2$ so that the attractive term and its regularization are functions with at most quadratic growth and their convolution with $\mu \in \P_2(\Rd)$ is finitely valued (see Remark \ref{rem:attraction powers}). We require $q>0$ to obtain compactness of the sequence of minimizers corresponding to the regularized energies $\E_\eps$ (see Remark \ref{rem:negative_attr_power}). We take $p < 0$ since our results are trivially true for nonsingular attractive-repulsive power-law kernels (see section \ref{sec:generalization}). On the other hand, we take $2-d\leq p$ in order to ensure monotonicity of the repulsive part of the regularized kernels with respect to $\eps$, when the mollifier is the heat kernel (see Proposition \ref{prop:monoton}).}
\end{remark}

\medskip

Our paper is organized as follows. In section \ref{sec:prelim}, we recall fundamental results on the 2-Wasserstein metric and the regularization of probability measures. We use these to establish the above mentioned monotonicity property of the regularized energies (\ref{reginteractionenergy}) in the regularization parameter $\eps>0$. In section \ref{sec:conv}, we prove the $\Gamma$-convergence of $\E_\epsilon$ to $\E$ and the corresponding convergence of minimizers. In section \ref{sec:generalization}, we extend these results to more general interaction kernels. Finally, in section \ref{gradflowsection}, we introduce background on curves of maximal slope and prove the $\Gamma$-convergence of these generalized gradient flows for repulsive-attractive power-law kernels restricted to bounded densities.
 
 Throughout, we denote by $L^p(\Rd)$ the space of functions whose $p$-th power is integrable with respect to the Lebesgue measure on $\Rd$ and by $L^p(\mu)$ those functions whose $p$-th power is integrable with respect to the probability measure $\mu\in\P(\Rd)$. $C(\Rd)$ denotes the space of continuous functions on $\Rd$, $C^\infty(\Rd)$ denotes the space of smooth functions on $\Rd$, and $B_r(x)$ denotes the open ball of radius $r$ centered at $x\in\Rd$. All constants which appear in the text are positive and may change from line to line. We express the dependence of constants to parameters by a subscript such as $C_{d,p}$ when necessary.

%%%%%%%%%%%%%%%%%%%%%%%%%%%%%%%%%%%%%%%%%%%%%%%%%%%%%%%%%%%%%%%%%%%%%%%%%%%%%%%%%%%%%%%%%%%%%%%%%%%%%%%% NOTATION and PRELIMINARIES
%%%%%%%%%%%%%%%%%%%%%%%%%%%%%%%%%%%%%%%%%%%%%%%%%%%%%%%%%%%%%%%%%%%%%%%%%%%%%%%%%%%%%%%%%%%%%%%%%%%
\section{Preliminaries}\label{sec:prelim}

\subsection{The 2-Wasserstein metric and regularization of probability measures}
We consider the energies $\E_\epsilon$ and $\E$ over the space
	\[
		\P_2(\Rd) := \left\{ \mu\in\P(\Rd) \colon \int_{\Rd} |x|^2\,d\mu(x) < +\infty \right\}
	\] 
of probability measures with finite second moment. We endow this space with the 2-Wasserstein metric, which we recall briefly now, along with some of its basic properties. For further background, we refer the reader to the books by Ambrosio, Gigli and Savar\'e \cite{AGS} and Villani \cite{Villani}.

The 2-Wasserstein distance between $\mu, \nu \in \P_2 (\Rd)$ is 
	\beqn\label{eqn:Wass_dist}
		d_W(\mu,\nu):=\left(\min\left\{\int_{\Rd}\!\int_{\Rd}|x-y|^2\,d\gamma(x,y)\colon \gamma\in\C(\mu,\nu)\right\}\right)^{1/2},
	\eeqn
where $\C(\mu,\nu)$ is the set of transport plans between $\mu$ and $\nu$,	\[
		\C(\mu,\nu):=\left\{\gamma\in\P(\Rd\times\Rd)\colon (\pi_1)_\#\gamma=\mu\quad\text{and}\quad(\pi_2)_\#\gamma=\nu\right\}.
	\]
Here $\pi_1$, $\pi_2$ denote the projections $\pi_1(x,y)=x$ and $\pi_2(x,y)=y$. For $i=1,2$, $(\pi_i)_{\#}\gamma$ denotes the pushforward of $\gamma$ defined by $(\pi_i)_{\#}\gamma (U):=\gamma(\pi_i^{-1}(U))$ for any measurable set $U\subset\Rd$.

The minimization problem \eqref{eqn:Wass_dist} admits a solution, i.e., there exists an optimal transport plan $\gamma_0\in \C_0(\mu,\nu)$ so that
	\[
		d_W^2(\mu,\nu)=\int_{\Rd}\!\int_{\Rd} |x-y|^2\,d\gamma_0(x,y).
	\]
Moreover, $(\P_2(\Rd),d_W)$ is a complete and separable metric space, and convergence in $(\P_2(\Rd),d_W)$ can be characterized as follows:
\smallskip
\begin{center}
	\begin{tabular}{lcl}
		$d_W(\mu_n,\mu)\arrow 0$ & $\Longleftrightarrow$ & $\mu_n\arrow\mu$ weak-$*$ in $\P(\Rd)$ and $\int_{\Rd}|x|^2\,d\mu_n(x)\arrow\int_{\Rd}|x|^2\,d\mu(x)$,
		\\
		\\
		& $\Longleftrightarrow$  & $\int_{\Rd} f(x)\,d\mu_n(x) \to \int_{\Rd} f(x) \,d\mu(x)$,\\[0.25cm]
		&   & $\forall f\in C(\Rd)$ such that $|f(x)| \leq C(1+ |x-x_0|^2)$.

	\end{tabular}
\end{center}
\medskip
We will refer to functions satisfying $|f(x)| \leq C(1+ |x-x_0|^2)$, for some $C>0$ and $x_0 \in \Rd$, as functions with \emph{at most quadratic growth}.

\bigskip

In addition to regularizing our energy functionals by convolution with a mollifier, we will also regularize our measures. We recall the definition of the convolution of a measure.

\medskip

\begin{defi}
For $\mu \in\P_2(\Rd)$ and $\varphi$ continuous with at most quadratic growth, $\mu * \varphi$ is defined by
	\begin{align} \label{convdef}
		\int_{\Rd} f(x)\,d(\mu* \varphi)(x) =  \int_{\Rd} f*\varphi(y)  d\mu(y),
	\end{align}
for all bounded measurable functions $f:\Rd\arrow\R$.
\end{defi}

\medskip

\begin{remark}
{\rm
Note that $\mu * \varphi$ is absolutely continuous with respect to Lebesgue measure and $d (\mu * \varphi) = \mu*\varphi(x)\,dx$, where $\mu*\varphi(x)= \int_{\Rd} \varphi(x-y)\,d\mu(y)$. If, in addition,  $\varphi(x) \in L^\infty(\Rd)$, then $\mu* \varphi(x) \in L^\infty(\Rd)$ and (\ref{convdef}) holds for all $f \in L^1(\Rd)$.
}
\end{remark}

\medskip

We recall the following lemma on the approximation of measures by convolution.

\medskip

\begin{lemma}[cf.{\cite[Lemma 7.1.10]{AGS}}]\label{lem:AGS_appr_conv}
	Fix $\mu\in\P_2(\Rd)$ and a mollifier $\varphi \in C^{\infty}(\Rd)$ with finite second moment, $\varphi \geq 0$, and $\int_{\Rd} \varphi(x) dx = 1$.
Then for $\varphi_\epsilon(x):=\epsilon^{-d}\varphi(x/\epsilon)$, $\mu*\varphi_\eps \in \P_2(\Rd)$ and
		\beqn\label{eqn:mollifieraprx}
			d_W(\mu*\varphi_\eps,\mu) \leq \eps\,\left(\int_{\Rd}|x|^2\varphi(x)\,dx\right)^{1/2}.
		\eeqn
\end{lemma}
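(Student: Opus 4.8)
The plan is to prove the bound by producing an explicit competitor in the minimization problem \eqref{eqn:Wass_dist} defining $d_W(\mu*\varphi_\epsilon,\mu)$ --- namely the ``add an independent $O(\epsilon)$ perturbation'' coupling --- which is essentially the argument behind \cite[Lemma~7.1.10]{AGS} and which I would reproduce here for completeness. Since $\varphi_\epsilon\geq 0$ and $\int_{\Rd}\varphi_\epsilon\,dx=\int_{\Rd}\varphi\,dx=1$, the measure $\nu_\epsilon$ with density $\varphi_\epsilon$ belongs to $\P(\Rd)$. Consider
\[
\gamma:=T_\#(\mu\otimes\nu_\epsilon),\qquad T(x,s):=(x,\,x+s),
\]
the pushforward of the product measure $\mu\otimes\nu_\epsilon$ on $\Rd\times\Rd$ under the shift map $T$.

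First I would verify that $\gamma\in\C(\mu,\mu*\varphi_\epsilon)$. The identity $\pi_1\circ T(x,s)=x$ shows $(\pi_1)_\#\gamma=\mu$. For the second marginal, for every measurable $A\subseteq\Rd$, Fubini's theorem and the change of variables $z=x+s$ give
\[
(\pi_2)_\#\gamma(A)=\int_{\Rd}\!\int_{\Rd}\mathbbm{1}_A(x+s)\,\varphi_\epsilon(s)\,ds\,d\mu(x)=\int_A\Big(\int_{\Rd}\varphi_\epsilon(z-x)\,d\mu(x)\Big)\,dz,
\]
and by the Remark following \eqref{convdef} the inner integral is precisely the density of $\mu*\varphi_\epsilon$, so $(\pi_2)_\#\gamma=\mu*\varphi_\epsilon$. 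Next I would compute the cost: since $|x-(x+s)|^2=|s|^2$,
\[
\int_{\Rd}\!\int_{\Rd}|x-y|^2\,d\gamma(x,y)=\int_{\Rd}\!\int_{\Rd}|s|^2\,d\mu(x)\,d\nu_\epsilon(s)=\int_{\Rd}|s|^2\varphi_\epsilon(s)\,ds=\epsilon^2\int_{\Rd}|w|^2\varphi(w)\,dw,
\]
using the substitution $s=\epsilon w$ together with $\varphi_\epsilon(s)=\epsilon^{-d}\varphi(s/\epsilon)$; the last integral is finite because $\varphi$ has finite second moment. Combining this with $|x+s|^2\leq 2|x|^2+2|s|^2$ and $\mu\in\P_2(\Rd)$ shows $\int_{\Rd}|z|^2\,d(\mu*\varphi_\epsilon)(z)<\infty$, hence $\mu*\varphi_\epsilon\in\P_2(\Rd)$ and $d_W(\mu*\varphi_\epsilon,\mu)$ is well defined. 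Finally, since $\gamma$ is admissible in \eqref{eqn:Wass_dist}, we obtain $d_W^2(\mu*\varphi_\epsilon,\mu)\leq\epsilon^2\int_{\Rd}|w|^2\varphi(w)\,dw$, and taking square roots yields \eqref{eqn:mollifieraprx}.

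I do not expect any real obstacle here. The only points needing a little care are the use of Fubini's theorem in identifying the second marginal --- justified since the relevant integrands are nonnegative, or bounded for indicator test functions --- and checking that the density $z\mapsto\int_{\Rd}\varphi_\epsilon(z-x)\,d\mu(x)$ arising from the pushforward coincides with the density of $\mu*\varphi_\epsilon$ recorded after \eqref{convdef}; both are routine. In essence the whole statement is the observation that, at the level of random variables, replacing $X\sim\mu$ by its law convolved with $\varphi_\epsilon$ amounts to adding an independent perturbation $S\sim\varphi_\epsilon$ whose $L^2$ norm is exactly $\epsilon\big(\int_{\Rd}|w|^2\varphi(w)\,dw\big)^{1/2}$.
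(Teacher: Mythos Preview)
Your argument is correct and is precisely the standard proof from \cite[Lemma~7.1.10]{AGS}: exhibit the coupling $T_\#(\mu\otimes\nu_\epsilon)$ with $T(x,s)=(x,x+s)$ and compute its cost. The paper does not supply its own proof of this lemma---it simply cites the result---so there is nothing to compare against beyond the original reference, which your proposal faithfully reproduces.
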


\subsection{Regularization of energies} \label{regenergysec}
In what follows, we consider mollifiers which satisfy the following assumptions:
\medskip
		\begin{itemize}	
				\addtolength{\itemsep}{6pt}
						\item[(M1)] $\varphi \in C^\infty(\Rd)$  and $\varphi(x)=\varphi(|x|)\geq 0$ for all $x\in\Rd$,
						\item[(M2)] $\int_{\Rd} \varphi(x)\,dx=1$,
						\item[(M3)] $\varphi(x)\leq C|x|^{-l}$ for some $l\geq \max \{2d+1,4 \}$ and $C>0$.
		\end{itemize}
		\medskip
		\begin{remark}[Finite second moment of mollifiers]\label{rem:moll_fin_sec_mom}
{\rm
Any mollifier $\varphi$ satisfying the assumptions (M1)--(M3) has finite second moment, i.e., $\int_{\Rd} |x|^2\varphi(x)\,dx<+\infty$.
}
\end{remark}

\medskip

As a consequence of the previous results on regularization of measures in the Wasserstein metric,
we have the following lemma which relates $\E_\epsilon$ to $\E$.

\medskip

\begin{lemma} \label{moveepsilontomu}
Consider an interaction energy $\E(\mu) = \iint_{\Rd \times \Rd} K(x-y)\,d\mu(x) d\mu(y)$,
where $K(x)$ is an even, nonnegative, locally integrable function that has most quadratic growth and is continuous outside of some ball. Given a mollifier $\varphi$ satisfying (M1)--(M3), then for all $\mu\in\P_2(\Rd)$
	\[
		\E_\epsilon(\mu) = \E(\mu*\varphi_\epsilon) .\]
\end{lemma}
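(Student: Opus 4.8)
The plan is to unwind both sides using the definition of convolution of a measure \eqref{convdef} and the symmetry $K_\epsilon = \varphi_\epsilon * K * \varphi_\epsilon$. First I would like to write, at least formally,
\[
\E_\epsilon(\mu) = \int_{\Rd}\!\int_{\Rd} (\varphi_\epsilon * K * \varphi_\epsilon)(x-y)\,d\mu(x)\,d\mu(y),
\]
and then move one copy of $\varphi_\epsilon$ onto each of the two measures, so that $\E_\epsilon(\mu)$ becomes $\iint K(x-y)\,d(\mu*\varphi_\epsilon)(x)\,d(\mu*\varphi_\epsilon)(y) = \E(\mu*\varphi_\epsilon)$. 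Concretely, fixing $y$ and viewing $x \mapsto (\varphi_\epsilon * K * \varphi_\epsilon)(x-y)$ as a function of $x$, I would recognize it as $f * \varphi_\epsilon$ for $f(x) = (\varphi_\epsilon * K)(x-y)$ (using that $\varphi_\epsilon$ is even, by (M1)), and apply \eqref{convdef} to the inner integral to replace $d\mu(x)$ by $d(\mu*\varphi_\epsilon)(x)$; then do the same in the $y$ variable. Since $\mu*\varphi_\epsilon \in \P_2(\Rd)$ by Lemma \ref{lem:AGS_appr_conv} (whose hypotheses hold by (M1), (M2), and Remark \ref{rem:moll_fin_sec_mom}), the right-hand side $\E(\mu*\varphi_\epsilon)$ makes sense.

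The genuine work is justifying these manipulations rigorously, since \eqref{convdef} as stated applies to \emph{bounded} measurable $f$, whereas $K$ has at most quadratic growth and a possible singularity (it is only assumed locally integrable and continuous outside some ball), so $(\varphi_\epsilon * K * \varphi_\epsilon)(\,\cdot\,-y)$ need not be bounded. I would address this by first checking that $K_\epsilon$ is in fact continuous with at most quadratic growth: the mollification $\varphi_\epsilon * K$ is continuous because $K$ is locally integrable and $\varphi_\epsilon$ is smooth and decays by (M3) (so the convolution integral converges against a quadratically-growing $K$, using that $l \geq 2d+1 > d+2$), and convolving once more with $\varphi_\epsilon$ preserves continuity and the at-most-quadratic growth bound. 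Having that, one can approximate $K$ (and similarly $K_\epsilon$) by bounded truncations $K^{(R)} := \min\{K, R\}\,\mathbf 1_{B_R}$, apply \eqref{convdef} to each truncation where it is legitimate, and pass to the limit $R \to \infty$ by monotone/dominated convergence — here the quadratic growth of $K$ and $K_\epsilon$ together with finiteness of the second moments of $\mu$ and $\mu*\varphi_\epsilon$ supply an integrable dominating function, so both $\E_\epsilon(\mu)$ and $\E(\mu*\varphi_\epsilon)$ are the limits of the corresponding truncated quantities, which agree.

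An alternative, possibly cleaner, route is to use Fubini directly: expand $K_\epsilon(x-y) = \iint \varphi_\epsilon(x - y - z - w) K(z - w)\, \ldots$ — more precisely write $K_\epsilon(u) = \int \varphi_\epsilon(a)\varphi_\epsilon(b) K(u - a + b)\,da\,db$ using evenness of $\varphi_\epsilon$, substitute into $\E_\epsilon(\mu)$, and reorder the (a priori $L^1$, after truncation) integrals so that the $a$-integral pairs with $d\mu(x)$ and the $b$-integral pairs with $d\mu(y)$; recognizing $\int \varphi_\epsilon(x - a')\,d\mu(x) = (\mu*\varphi_\epsilon)$-integration then yields $\E(\mu*\varphi_\epsilon)$ after the same truncation-and-limit argument. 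Either way, the main obstacle is purely the integrability bookkeeping needed to legitimately exchange the order of integration and to extend \eqref{convdef} past bounded integrands; once the growth and continuity of $K_\epsilon$ are established, this is routine but must be done carefully because $K$ is nonnegative and quadratically growing rather than bounded. Nonnegativity of $K$ (hence of $K_\epsilon$) is what makes the monotone convergence step on the truncations clean and guarantees both sides are well-defined in $[0,+\infty]$.
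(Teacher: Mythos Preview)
Your proposal is correct and follows the same overall strategy as the paper: move one copy of $\varphi_\epsilon$ from the kernel onto each copy of the measure. The difference is in how the integrability bookkeeping is handled. The paper avoids truncation and monotone convergence altogether by observing that, since $\varphi_\epsilon \in L^\infty(\Rd)$, the identity \eqref{convdef} extends from bounded $f$ to all $f \in L^1(\Rd)$ \emph{and} to all continuous $f$ with at most quadratic growth (the latter because $\mu \in \P_2(\Rd)$). It then decomposes $K$ (hence each shifted $\tau_y K$) as the sum of an $L^1$ piece near the origin and a continuous, at-most-quadratic piece away from the origin, and applies the extended \eqref{convdef} to each piece directly. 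This lets the paper write the chain of equalities in one line with no limiting argument. Your truncation-and-limit route is perfectly valid and arguably more self-contained; the paper's route is shorter once one has noted the two natural extensions of \eqref{convdef}.
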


\begin{proof}
Define the right shift operator $\tau_y: f(x) \mapsto f(x-y)$. For any $f$ and $g$ even,
	\begin{align*}
			\tau_y[f*g](x) &= f*g(x-y) = \int_{\Rd} f(x-y-z) g(z)\,dz \\
			&=  \int_{\Rd} \tau_y[f](x-z) g(z)\,dz = \tau_y[f] * g(x).
  \end{align*}
Furthermore, since $f*g(x-y) = f*g(y-x)$, we also have $\tau_y[f*g](x) = \tau_x[f]*g(y)$. 
Since $\varphi \in L^\infty(\Rd)$, (\ref{convdef}) holds for all continuous $f$ of at most quadratic growth and for all $f \in L^1(\Rd)$. As we may decompose $K$ into the sum of a continuous function of at most quadratic growth (away from the origin) and an integrable component (near the origin), the result then follows:
	\begin{align*}
		\E_\epsilon(\mu) &= \int_{\Rd}\!\int_{\Rd} \varphi_\epsilon * K * \varphi_\epsilon(x-y)\, d\mu(x) d\mu(y) = \int_{\Rd}\!\int_{\Rd} \tau_y[\varphi_\epsilon * K] * \varphi_\epsilon(x)\, d\mu(x) d\mu(y), \\
&= \int_{\Rd}\!\int_{\Rd} \varphi_\epsilon * \tau_y[K] * \varphi_\epsilon(x)\, d\mu(x) d\mu(y) = \int_{\Rd}\!\int_{\Rd} \varphi_\epsilon * \tau_y[K](x) \,d[\mu*\varphi_\epsilon](x) d\mu(y),   \\
&= \int_{\Rd}\!\int_{\Rd} \varphi_\epsilon * \tau_x[K](y)\, d[\mu*\varphi_\epsilon](x) d\mu(y)  = \int_{\Rd}\!\int_{\Rd}  K(x-y)\, d[\mu*\varphi_\epsilon](y) d[\mu*\varphi_\epsilon](x), \\
&= \E(\mu*\varphi_\epsilon).
\end{align*}
\end{proof}

\medskip

For general interaction kernels and mollifiers, there is no uniform relation between the size of $\E_\epsilon(\mu)$ compared to  $\E(\mu)$. 
Consequently, in this section and section \ref{sec:conv}, we consider interaction energies of the following form:	
\medskip
		\begin{itemize}	
				\addtolength{\itemsep}{6pt}
						\item[(E1)] $\E(\mu) = \iint_{\Rd \times \Rd} K(x-y)\,d\mu(x) d\mu(y)$ for $K(x) = |x|^q/q - |x|^p/p$,
						\item[(E2)] $2-d \leq p < 0< q\leq 2$.
		\end{itemize}	
\medskip
If we mollify energies satisfying (E1)--(E2) via the heat kernel, we obtain monotonicity in $\eps$ of the repulsive part of the interaction energy. This generalizes a property used by Blanchet, Carlen, and Carrillo \cite{BCC} for the Newtonian potential.
In section \ref{sec:generalization}, we describe how to extend these results to more general interaction energies.

Let $K^a:=\frac{1}{q}|x|^q$ denote the attractive part of the interaction kernel and $K^r:=-\frac{1}{p}|x|^p$ the repulsive part. Similarly, let $\Ea$ denote the attractive part of the energy and $\Er$ denote the repulsive part,
	\beqn \label{attractiverepulsivepartenergy}	
		\Ea(\mu) = \int_{\Rd}\!\int_{\Rd} K^a(x-y)\,d\mu(x)d\mu(y), \quad
		\Er(\mu) = \int_{\Rd}\!\int_{\Rd} K^r(x-y)\,d\mu(x)d\mu(y).	
	\eeqn

\medskip

\bprop[Monotonicity of $\E^r_\epsilon$]\label{prop:monoton}
Suppose the energy $\E$ satisfies (E1)--(E2) and the regularization mollifier $\psi_\eps$ is given by the heat kernel. Then for all $\epsilon_1\geq\epsilon_2 > 0$ and $\mu \in \P_2(\Rd)$
		\[
			\E^r_{\epsilon_1}(\mu) \leq \E^r_{\epsilon_2}(\mu).
	\]
\eprop

\begin{proof}
Since $\psi_\eps(x)=\psi(\eps,x)=(4\pi\eps)^{-d/2}e^{-|x|^2/4\eps}$, $\pt_\eps\psi=\Delta_x\psi$ for all $\eps>0$ and $x\in\Rd$. In the parameter regime $2-d < p <0$, the Riesz potential
	\[
		I_{d+p}(f)(x)= C_{d,p} \int_{\Rd} f(y)|x-y|^p\,dy = - C_{d,p}\,p K^r*f(x) 
	\]
satisfies the identity
	\[
		I_{d+p}(\Delta f) = \Delta I_{d+p}(f) = -I_{d+p-2}(f)
	\]
for any Schwartz class function $f$ (see e.g. \cite{SteinSingularIntegrals}). Moreover, $I_{d+p}(f)\geq 0$ for any $f\geq 0$.

Defining $K^r_\epsilon(x) = \psi_\eps*K^r * \psi_\eps$ and using the above identity we obtain
	\beqn
		\begin{aligned}
	 \frac{\pt}{\pt \epsilon}K^r_\epsilon(x) &= 2\psi_\eps * K^r * \left(\partial_\eps \psi_\eps \right) 
	 																				 = 2\psi_\eps * K^r * (\Delta \psi_\epsilon) = -(p\,C_{d,p})^{-1}\psi_\eps * I_{d+p}(\Delta \psi_\eps)\\
	 																				 &= (p\,C_{d,p})^{-1}\,\psi_\eps*I_{p+d-2}(\psi_\eps) \leq 0
	 	\end{aligned}
	 	\nonumber
  \eeqn
since $p<0$ and $\psi_\eps\geq 0$. Thus, $K^r_\epsilon$ is monotonically decreasing in $\epsilon$.

When $p=2-d$, $I_{d+p}(f)$ is the Newtonian potential and satisfies the identity $I_{d+p}(\Delta f) = \Delta I_{d+p}(f) = -f$. Again, differentiating $K^r_\eps$ with respect to $\eps$ yields the desired result. Therefore, by definition of  $\E^r$ \eqref{attractiverepulsivepartenergy}, we conclude  monotonicity in $\eps$.
\end{proof}

\medskip

We close this section with brief remarks on the convexity and differentiability of regularized energies.

\medskip

\begin{remark}[$\lambda_\epsilon$-convexity of $\E_\epsilon$] \label{rem:lambdaconv}
{\rm
 Recall that a function $f:\Rd\arrow\R\cup\{+\infty\}$ is $\lambda$-convex if $f(x)-\frac{\lambda}{2}|x|^2$ is convex for some $\lambda \leq 0$. If $f$ is twice continuously differentiable, this is equivalent to $D^2 f(x) \geq \lambda I $ for all $x \in \Rd$. Consequently, if $\varphi$ satisfies (M1)--(M3) and $\E$ satisfies (E1)--(E2), then  both the regularized potentials $K_\epsilon$ and their opposites $-K_\epsilon$ are $\lambda_\epsilon$-convex with $\lambda_\epsilon = -C_\varphi\epsilon^{-d}$, where $C_\varphi$ is a positive constant depending on $\varphi$ and $\eps$ is sufficiently small. For an interaction energy of the form (\ref{interactionenergy}), $\lambda$-convexity of the kernel $K$ ensures $\lambda$-convexity of the energy $\E$ with respect to the Wasserstein metric \cite{5person}. Thus, $\E_\epsilon$ and $-\E_\epsilon$ are both $\lambda_\epsilon$-convex.
 }
\end{remark}

\medskip

\begin{remark}[Differentiability of $\E_\epsilon$] \label{rem:differentiability}
{\rm
Given a functional $\F: \P_{2}(\Rd) \to \R \cup \{+\infty\}$ and $\mu\in\P_2(\Rd)$ so that $\F(\mu)< +\infty$,  the \emph{metric local slope} of $\F$ at $\mu$ is
\[ |\partial \F|(\mu) := \limsup_{\nu \to \mu} \frac{(\F(\mu) - \F(\nu))_+}{d_W(\mu,\nu)}. \]
If $\varphi$ satisfies (M1)--(M3) and $\E$ satisfies (E1)--(E2),  the metric local slope for  $\E_\epsilon$ is well-defined (cf. \cite[Proposition 2.2]{5person}, \cite[Lemma 10.1.5]{AGS}) and
 \[ | \partial \E_\epsilon|(\mu) = 2\|\nabla K_\epsilon *\mu \|_{L^2(\mu)}. \]

 Furthermore, for $\mu \in \P_2(\Rd)$, we have $2\|\nabla K_\epsilon *\mu \|_{L^2(\mu)} \leq C_{\varphi, \mu}  \epsilon^{1-d} $ for $\eps$ sufficiently small. 
 To see this, let $0 \leq \eta(x) \leq 1$ be a smooth function satisfying $\eta(x) = 0$ if $|x| \leq 1$ and $\eta(x) = 1$ if $|x| \geq 2$. Using $\eta$, we decompose $\nabla K_\epsilon$ into its singular and nonsingular components, $\nabla K_\epsilon = \nabla K_\epsilon (1-\eta) + \nabla K_\epsilon \eta = \nabla K_\epsilon^s + \nabla K_\epsilon^n$. By linearity of convolution and the triangle inequality for $L^2(\mu)$, it suffices to estimate $\|\nabla K_\epsilon^s* \mu\|_{L^2(\mu)}$ and $\|\nabla K_\epsilon^n*\mu\|_{L^2(\mu)}$. The former is bounded by $\|\nabla K^s_\epsilon * \mu\|_{L^\infty(d \mu)} \leq \sup_{x \in \Rd} |\nabla K^s_\epsilon(x)| \leq C_\varphi \epsilon^{1-d}$. To bound the latter, we apply Minkowski's integral inequality, the fact that $\nabla K^n_\epsilon$ has at most linear growth, and the fact that $\mu$ has finite second moment to obtain 
 \[ \|\nabla K_\epsilon^n*\mu\|_{L^2(\mu)} \leq \int_{\Rd} \left( \int_{\Rd} |\nabla K_\epsilon^n(x-y)|^2 \,d \mu(y) \right)^{1/2} d\mu(x) \leq C_{\varphi, \mu}.\]
}
\end{remark}

\medskip

\begin{remark}[Attraction powers $q>2$] \label{rem:attraction powers}
{\rm
 One possible way to extend our results to cover attraction powers $q > 2$ would be to consider the energy $\E$ over the space $\P_q(\Rd)$ of probability measures with finite moments up to order $q$, endowed with the $q$-Wasserstein distance
   \[
      d_{q}(\mu,\nu) := \left(\min\left\{\int_{\Rd}\!\int_{\Rd}|x-y|^q\,d\gamma(x,y)\colon \gamma\in\C(\mu,\nu)\right\}\right)^{1/q},
   \]
where $\C(\mu,\nu)$ is the set of transport plans between $\mu$ and $\nu$.
 Alternatively, to cover all  $q >0$, one could consider the $\infty$-Wasserstein metric
  \[
    d_{\infty}(\mu,\nu) := \inf_{\gamma\in\C(\mu,\nu)} \, \sup_{(x,y)\in\supp(\gamma)} |x-y|
  \]
 over the space $\P_{\infty}(\Rd)$ of probability measures with finite moments of all orders. The space $(\P_{\infty}(\Rd),d_{\infty})$ is a complete metric space \cite{GiSh84} and has been used in the study of local minimizers for several variational problems, including nonlocal repulsive-attractive energies \cite{BalagueCarrilloLaurentRaoul_Dimensionality,CDM14,McCann2}.

 The main difficulty in extending our results to $\P_q(\Rd)$ or $\P_{\infty}(\Rd)$ lies in understanding the appropriate generalization of the $\lambda$-convexity to these distances and, in particular, the appropriate analogue of the HWI inequality, which plays a key role in the proof of Theorem \ref{thm:gammaconv}. We leave this to future work.
}
\end{remark}

%%%%%%%%%%%%%%%%%%%%%%%%%%%%%%%%%%%%%%%%%%%%%%%%%%%%%%%%%%%%%%%%%%%%%%%%%%%%%%%%%%%%%%%%%%%%%%%%%%%%%%%% CONVERGENCE OF MINIMIZERS
%%%%%%%%%%%%%%%%%%%%%%%%%%%%%%%%%%%%%%%%%%%%%%%%%%%%%%%%%%%%%%%%%%%%%%%%%%%%%%%%%%%%%%%%%%%%%%%%%%%
\section{Convergence of regularized energies and minimizers}\label{sec:conv}

We now turn to the proof that, up to a subsequence, minimizers of the regularized energies $\E_\epsilon$  converge to  a minimizer $\mu$ of $\E$ with respect to $d_W$. We establish this by first proving a $\Gamma$-convergence result for the sequence of energies $\E_\epsilon$ and then obtaining compactness for any sequence $\{\mu_\epsilon\}_{\epsilon>0}$ when $\E_\epsilon(\mu_\epsilon)$ is uniformly bounded. For the latter, we use Lions' result on concentration compactness, which we recall  for the readers' convenience. 

\medskip

\blemma[{Concentration compactness lemma for measures (cf. \cite{Lions84}, \cite[Section 4.3]{Struwe})}]\label{lem:conc_comp}
		Let $\{\mu_n\}_{n\in\mathbb{N}}$ be a sequence of probability measures on $\Rd$. Then there exists a subsequence $\{\mu_{n_k}\}_{k\in\mathbb{N}}$ satisfying one of the three following possibilities:
		\vspace{0.2cm}
	\begin{itemize}
		\item[(i)] \emph{(tightness up to translation)} There exists a sequence $\{y_k\}_{k\in\mathbb{N}}\subset\Rd$ such that for all $\epsilon>0$ there exists $R>0$ with the property that
			\[
				\int_{B_R(y_k)}\,d\mu_{n_k}(x) \geq 1-\epsilon \qquad {\hbox{\rm  for all $k$.}}
			\]
		 \item[(ii)] \emph{(vanishing)} $\disp \lim_{k\arrow\infty} \sup_{y\in\Rd} \int_{B_R(y)}\,d\mu_{n_k}(x)=0$, for all $R>0$; \vspace{0.2cm}
		 \item[(iii)] \emph{(dichotomy)} There exists $\alpha\in(0,1)$ such that for all $\epsilon>0$, there exist a number $R>0$ and a sequence $\{x_k\}_{k\in\mathbb{N}}\subset\Rd$ with the following property:\\
		 
		 	\vspace{-0.2cm}
		 Given any $R^\pr>R$ there are nonnegative measures $\mu_k^1$ and $\mu_k^2$ such that
		 	\vspace{0.2cm}
			\begin{itemize}
		 		\item[] $0\leq \mu_k^1 + \mu_k^2 \leq \mu_{n_k}$\,, \quad $\supp(\mu_k^1)\subset B_R(x_k)$,\quad $\supp(\mu_k^2)\subset \Rd\setminus B_{R^\pr}(x_k)$\,,
				\vspace{0.2cm}
		 		\item[] $\disp \limsup_{k\arrow\infty} \left(\left|\alpha-\int_{\Rd}d\mu_k^1(x)\right|+\left|(1-\alpha)-\int_{\Rd}d\mu_k^2(x)\right|\right)\leq \epsilon$.
		 	\end{itemize}
	\end{itemize}
\elemma

\bigskip

We begin our proof of the convergence of minimizers with the observation that minimizers exist for both the regularized and unregularized energies.

\medskip

\bprop[Existence of minimizers in $\P_2(\Rd)$]\label{prop:exist}
Suppose the energy $\E$ satisfies (E1)--(E2) and the mollifier $\varphi$ satisfies (M1)--(M3). Then both $\E$ and $\E_\epsilon$ admit a minimizer in $\P_2(\Rd)$, for $\epsilon >0$ sufficiently small.
\eprop

\begin{proof}
First note that, in the parameter regime $2-d < p<0<q\leq 2$, the interaction potential $K$ and its regularization $K_\epsilon$ are locally integrable and lower semicontinuous.

We now show that $K$ and $K_\epsilon$ are strictly increasing in each coordinate for $x_i$ sufficiently large. For  $K$ given as in (E1)--(E2) and $x \neq 0$,
		\[
		 \partial_{x_i}K(x) = x_i |x|^{q-2} - x_i |x|^{p-2} = x_i |x|^{q-2}(1- |x|^{p-q}).
	  \]
Consequently, if $x_i > 1$, $\partial_{x_i}K(x)$ is positive.

Now we consider $K_\epsilon$. It suffices to show that if $x_i >2$, then
		\begin{align} \label{regularizedincreasing1}
 			|\partial_{x_i} K_\epsilon(x) - \partial_{x_i} K(x) | \leq C \epsilon |x|^{q-2}
 		\end{align}
for some constant $C>0$. Then for $x_i >2$, we have  $(1- |x|^{p-q}) > \tilde{C}_{p,q}>0$ and
		\begin{align*}
 			\partial_{x_i} K_\epsilon(x) &= \partial_{x_i} K(x) + \partial_{x_i} K_\epsilon(x) - \partial_{x_i} K(x) \geq x_i |x|^{q-2}(1- |x|^{p-q}) - C \epsilon |x|^{q-2}\\
 &\geq 2|x|^{q-2}\tilde{C}_{p,q} - C \epsilon |x|^{q-2} = |x|^{q-2} (2\tilde{C}_{p,q} - C \epsilon).
 		\end{align*}
Choosing $\epsilon$ sufficiently small, this is nonnegative.
 
We now prove inequality (\ref{regularizedincreasing1}). First, we use a cutoff function to rewrite $\partial_{x_i} K(x)$ as the sum of a compactly supported singular function $\pt_{x_i}K^s$ and a continuously differentiable  function $\pt_{x_i}K^n$.  Let $0 \leq \eta \leq 1$ be a  smooth function satisfying $\eta(x) \equiv 0$ for $|x|< 1/4$ and $\eta(x) \equiv 1$ for $|x| > 1/2$. Write $\partial_{x_i} K =  (1-\eta) \partial_{x_i} K+ \eta \partial_{x_i} K  =: \pt_{x_i}K^s + \pt_{x_i}K^n$. Then, for $\Phi_{\eps}(y):=\int_{\Rd}\varphi_{\eps}(y-z)\varphi_{\eps}(z)\,dz$,
 		\begin{align*} 
 			&|\partial_{x_i} K_\epsilon(x) - \partial_{x_i} K(x)| = \left| \int_{\Rd} \left(\partial_{x_i} K(x-y) - \partial_{x_i} K(x) \right) \Phi_\epsilon(y)\,dy \right| \\
 																													&\quad \leq \left| \int_{\Rd} \left(\pt_{x_i}K^s(x-y) - \pt_{x_i}K^s(x) \right) \Phi_\epsilon(y)\,dy \right| \\
																				&\qquad + \left| \int_{\Rd} \left(\pt_{x_i}K^n(x-y) - \pt_{x_i}K^n(x) \right) \Phi_\epsilon(y)\,dy \right| \\
 																													 &\quad =: I_1 + I_2.
		\end{align*}
		
The autocorrelation function $\Phi_\eps$ satisfies $\Phi_\eps(y)=\eps^{-d}\Phi(y/\eps)$ for all $y\in\Rd$ and has the same decay property (M3) as $\varphi_\eps$. To see this, suppose $|y|=3R$ for some $R>0$. Then for any $z\in\Rd$ we have $|z|>R$ or $|y-z|>R$; hence,
		\beqn \nonumber
			\begin{aligned}
			|\Phi_\eps(y)| &= \frac{1}{\eps^{2d}} \int_{\Rd} \varphi\left(\frac{y-z}{\eps}\right)\varphi\left(\frac{z}{\eps}\right)\,dz \\
									   &\leq \frac{1}{\eps^{2d}} \left( \int_{|z|>R}\varphi\left(\frac{y-z}{\eps}\right)\varphi\left(\frac{z}{\eps}\right)\,dz + \int_{|y-z|>R} \varphi\left(\frac{y-z}{\eps}\right)\varphi\left(\frac{z}{\eps}\right)\,dz \right) \\
									   &\leq C\eps^{l-2d}R^{-l} = C\eps^{l-2d}|y|^{-l}.
			\end{aligned}
		\eeqn

First, we estimate $I_1$. If $x_i > 2$, then $|x| >2$, so $\pt_{x_i}K^s(x) = 0$. Consequently,
		\begin{align*}
				I_1= \left| \int_{|x-y|< 1} \pt_{x_i}K^s(x-y)  \Phi_\epsilon(y)\,dy \right|.
		\end{align*}
 Since $|x-y| <1$ and $|x| >2$,
		\[
			|y| \geq |x| - |y-x| \geq |x| - 1 \geq |x|/2.
		\]
Consequently,
		\[ 
			|\Phi_\epsilon(y)| \leq C\eps^{l-2d}|y|^{-l}  \leq C\, \epsilon\, |y|^{-l} \leq C\, 2^{l}\, \epsilon\, |x|^{-l}
		\]
for $0<\eps<1$.
Since $\pt_{x_i}K^s $ is locally integrable and $2-q <2d+1\leq l$, there exists $C>0$ so that for $|x| >2$,
		\[ 
			I_1 \leq C \,\epsilon\, |x|^{-l} \leq C\, \epsilon\, |x|^{q-2}.
		\]

Now we estimate $I_2$. Since $\pt_{x_i}K^n$ is continuously differentiable,
		\begin{align*}
			I_2 &=  \left| \int_{\Rd} \! \int_0^1 \frac{d}{d \alpha} \pt_{x_i}K^n(x- \alpha y)\,d \alpha \,\Phi_\epsilon(y)\, dy \right|  \\
			&=  \left| \int_{\Rd} \! \int_0^1 \langle \nabla \pt_{x_i}K^n(x- \alpha y), -y \rangle \, d \alpha \,\Phi_\epsilon(y)\, dy \right| \\
					&\leq \int_{|y| \leq |x|/2}  \int_0^1  |\nabla \pt_{x_i}K^n(x- \alpha y)|\, |y|\,d \alpha \, \Phi_\epsilon(y)\, dy \\ &\quad +  \int_{|y| > |x|/2}  \int_0^1  |\nabla \pt_{x_i}K^n(x- \alpha y)|\, |y|\,d \alpha \, \Phi_\epsilon(y)\, dy.
		\end{align*}
For  $|y| \leq |x|/2$, $|x - \alpha y| \geq |x|/2 > 1$ for all $\alpha \in [0,1]$. Thus,  $\nabla \eta(x-\alpha y) \equiv 0$, and
		\begin{align*}
			|\nabla \pt_{x_i}K^n(x- \alpha y) | \leq |(\nabla \eta(x- \alpha y) )\partial_{x_i} K(x- \alpha y) | + |\eta(x- \alpha y)  \nabla \partial_{x_i} K(x- \alpha y) | \leq C|x|^{q-2}.
		\end{align*} 
On the other hand, for all all $y \in \Rd$, $|\nabla \pt_{x_i}K^n(x-\alpha y)| \leq C$. Therefore,
		\begin{align*}
			I_2 &\leq C |x|^{q-2} \int_{|y| < |x|/2} |y|\, \Phi_\epsilon(y) \, dy + C \int_{|y| > |x|/2} |y|\, \Phi_\epsilon(y) \, dy \\
					&=  C \epsilon|x|^{q-2} \int_{|z| < |x|/(2\epsilon)}  |z| \,\Phi(z)\, dz +  C \epsilon \int_{|z| > |x|/(2 \epsilon)}  |z| \,\Phi(z)\, dz.
		\end{align*}
		
The first integral is bounded by a constant since $\Phi$ satisfies decay property (M3), hence, has finite first moment. For the second integral, we use that $|z \Phi(z) | \leq C|z|^{-l+1}$ and $2-q < l-d-1$ to obtain
		\[ 
			\int_{|z| > |x|/(2 \epsilon)}  |z|\, \Phi(z)\, dz \leq  C\int_{ |x|/(2 \epsilon)}^{+\infty} r^{-l+1} r^{d-1} dr = C \left( \frac{|x|}{2 \epsilon} \right)^{d-l+1} \leq C \epsilon |x|^{q-2}.
		\]
Therefore, \eqref{regularizedincreasing1} follows and we conclude that both $K(x)$ and $K_\epsilon(x)$ are strictly increasing in each coordinate for $|x| >2$.

\smallskip

We now show that $K(x)$ and $K_\epsilon(x)$ become arbitrarily large as $|x| \to +\infty$. This is immediate for $K(x)$, since $q>0$. Furthermore, since $K(x)$  and $\Phi_\epsilon(x)$ are nonnegative and $\int_{\Rd} \Phi_\epsilon(x)\,dx = 1$, we also have
\[ K_\epsilon(x)  \geq \int_{|y|< |x|/2} K(x-y) \Phi_\epsilon(y) dy \geq \min_{|z| \geq |x|/2} K(z) \xrightarrow{|x| \to +\infty} +\infty. \]

Thus, for both $\E$ and $\E_\epsilon$, there exists a minimizer in $\P(\Rd)$ \cite[Theorem 3.1]{SiSlTo2014}. This minimizer is compactly supported when $\eps>0$ is sufficiently small, thus it belongs to $\P_2(\Rd)$  \cite[Theorem 1.4]{CCP}.
\end{proof}

\medskip

\begin{remark}\label{rem:inf_equality} 
{\rm
Since $\P_2(\Rd)\subset\P(\Rd)$, the infimum of the energy $\E$ or $\E_\epsilon$ over $\P(\Rd)$ is less than or equal to the infimum over $\P_2(\Rd)$. Due to the fact that minimizers have compact support \cite[Lemma 2.10]{CCP}, the converse holds, as well. Hence,
	\[
		\inf_{\mu\in\P(\Rd)}\E(\mu) = \inf_{\mu\in\P_2(\Rd)}\E(\mu) \quad\text{ and }\quad \inf_{\mu\in\P(\Rd)}\E_\epsilon(\mu) = \inf_{\mu\in\P_2(\Rd)}\E_\epsilon(\mu)
	\]
for all $\eps>0$ sufficiently small.
}
\end{remark}

\bigskip

Now we prove the regularized energies $\E_\eps$ converge to $\E$ in the sense of $\Gamma$-convergence.

\medskip

\bthm[$\Gamma$-convergence of regularized energies]\label{thm:gammaconv} Suppose the energy $\E$ satisfies (E1)--(E2) and the mollifier $\varphi$ satisfies (M1)--(M3). Then the sequence of regularized energies $\{\E_\epsilon\}_{\epsilon>0}$  $\Gamma$-converges to the energy with respect to $(\P_2(\Rd),d_W)$. That is,
	\begin{itemize}
		\item[(i)] \emph{(Lower semicontinuity)} For any $\{\mu_\eps\}_{\eps>0}\subset\P_2(\Rd)$ and $\mu \in \P_2(\Rd)$ such that it holds $\lim_{\eps\arrow0}d_W(\mu_\epsilon,\mu)=0$, we have that
	\[
		\liminf_{\eps\arrow 0} \E_\eps(\mu_\eps) \geq \E(\mu).
	\]
		\item[(ii)] \emph{(Recovery sequence)} For any $\mu\in\P_2(\Rd)$ there exists  $\{\nu_\eps\}_{\eps>0}\subset\P_2(\Rd)$ such that
	\[
		\lim_{\eps\arrow 0}d_W(\nu_\epsilon,\mu)=0 \text{ and }\lim_{\epsilon\arrow 0} \E_\epsilon(\nu_\epsilon) = \E(\mu).
	\]
	\end{itemize}
\ethm

\begin{proof} We will prove this theorem in two steps.

\medskip

\noindent{Step 1} (Lower semicontinuity). By Lemma \ref{moveepsilontomu}, we have $\E_\eps(\mu_\eps) =  \E (\mu_\eps* \varphi_\eps)$. Furthermore,
	\beqn\label{eqn:lsc_middle_step}
		d_W(\mu,\mu_\eps*\varphi_\eps) \leq d_W(\mu,\mu_\eps) + d_W(\mu_\eps,\mu_\eps*\varphi_\eps),
	\eeqn
where the first term approaches zero by hypothesis and the second term approaches zero by Lemma~\ref{lem:AGS_appr_conv}. 

As the interaction potential $K$ is lower semicontinuous and bounded from below,  the Portmanteau Theorem \cite[Theorem 1.3.4]{van1996weak} ensures the energy $\E$ is lower semicontinuous with respect to convergence in $\P_2(\Rd)$. Indeed, the Portmanteau Theorem states that the weak-* convergence of $\mu_\eps \times \mu_\eps$ to $\mu \times \mu$ is equivalent to the fact that
	\[
		\liminf_{\eps\arrow 0} \iint_{\R^n\times\R^n} f(x-y)\,d(\mu_\eps \times\mu_\eps)(x,y) \geq \iint_{\R^n\times\R^n} f(x-y)\,d(\mu \times \mu)(x,y)
	\] 
when $f$ is lower-semicontinuous and bounded from below. Hence, we obtain
	\[
	 \liminf_{\eps\arrow 0} \E_\eps (\mu_\eps) = \liminf_{\eps\arrow 0} \E(\mu_\eps*\varphi_\eps) \geq \E(\mu). 
	\] 

\noindent{Step 2} (Recovery sequence). Let $\mu\in\P_2(\Rd)$ be arbitrary. We define a recovery sequence $\nu_\epsilon$ for the measure $\mu$ by using the heat kernel. Let $\psi(x)=(4\pi)^{-d/2}\,e^{-|x|^2/4}$, and define $\psi_{\delta(\epsilon)}(x) := \delta(\epsilon)^{-d} \psi(x/\delta(\epsilon))$ where 
\begin{align} \label{deltaeqn}
\delta(\epsilon) := \epsilon^{1/2d}.
\end{align}
Clearly the function $\psi$ satisfies the assumptions (M1)--(M3) in Subsection \ref{regenergysec}; hence, it is an admissible mollifier. Define the measure
	\[
	\nu_{\epsilon}:= \mu * \psi_{\delta(\epsilon)}.
	\]
Then, by Lemma \ref{lem:AGS_appr_conv}, $\nu_{\eps}\in\P_2(\Rd)$ for all $\eps>0$, and $d_W(\nu_{\eps},\mu)\arrow 0$ as $\eps\arrow 0$.

We now show that $\lim_{\eps\arrow 0}\E_{\eps}(\nu_\eps)=\E(\mu)$. By Lemma \ref{moveepsilontomu},
	\beqn
		\begin{aligned}
			|\E_\eps(\nu_\eps)-\E(\mu)| &= |\E(\mu*\psi_{\delta(\eps)}*\varphi_\eps)-\E(\mu)| = |\E_{\delta(\eps)}(\mu*\varphi_\eps)-\E(\mu)| \\
																																		&\leq |\E_{\delta(\eps)}(\mu*\varphi_\eps)-\E_{\delta(\eps)}(\mu)|+|\E_{\delta(\eps)}(\mu)-\E(\mu)|
																	=: I_1 + I_2.
		\end{aligned}
		\label{eqn:recseq1}
	\eeqn
	It suffices to show that $I_1, I_2 \to 0$ as $\epsilon \to 0$.
We estimate $I_2$ first. As $\E_{\delta(\eps)}(\mu)$ is regularized using the heat kernel $\psi_{\delta(\eps)}$, Proposition \ref{prop:monoton} ensures that $\Er_{\delta(\eps)}(\mu)\leq \Er(\mu)$ for all $\eps>0$. Hence,
	\[
		\limsup_{\eps\arrow 0} \E_{\delta(\eps)}(\mu) \leq \Er(\mu) + \limsup_{\eps\arrow 0} \Ea_{\delta(\eps)}(\mu).
	\]
Again, by Lemma \ref{moveepsilontomu}, $\Ea_{\delta(\eps)}(\mu)=\Ea(\mu*\psi_{\delta(\eps)})$. Since $d_W(\mu*\psi_{\delta(\eps)},\mu)\arrow 0$ as $\eps\arrow 0$ and the attractive interaction kernel $K^a$ is a continuous function with at most quadratic growth,
	\[
		\limsup_{\eps\arrow 0} \Ea_{\delta(\eps)} (\mu) = \limsup_{\eps\arrow 0} \Ea(\mu*\psi_{\delta(\eps)}) = \Ea(\mu).
	\]
Therefore
	\[
	 \limsup_{\eps\arrow 0} \E_{\delta(\eps)}(\mu) \leq \E(\mu).
	\]
On the other hand, by the lower semicontinuity of the energy $\E$, as proved in the first step of the proof, we get that
	\[
		\liminf_{\eps\arrow 0} \E_{\delta(\eps)}(\mu) = \liminf_{\eps\arrow 0} \E(\mu*\psi_{\delta(\eps)}) \geq \E(\mu);
	\]
hence, $I_2=|\E_{\delta(\eps)}(\mu)-\E(\mu)|\arrow 0$ as $\eps\arrow 0$.

\medskip

To estimate $I_1$, recall that by Remark \ref{rem:lambdaconv} and \ref{rem:differentiability} the regularized energies $\pm \E_{\delta(\eps)}$ are $\lambda_{\delta(\epsilon)}$-convex with $\lambda_{\delta(\epsilon)}=-C_\varphi \delta(\epsilon)^{-d}$ and have metric local slope $\|\nabla K * \psi_{\delta(\epsilon)} * \mu \|_{L^2(\mu)}$. Therefore, they satisfy the following HWI-type inequality \cite[Theorem 2.4.9]{AGS},
	\beqn
	  |\E_{\delta(\eps)}(\mu*\varphi_\eps)-\E_{\delta(\eps)}(\mu)| \leq 2 \|\nabla K*\psi_{\delta(\eps)}*\mu\|_{L^2(\mu)}d_W(\mu*\varphi_\eps,\mu) - \frac{\lambda_{\delta(\epsilon)}}{2}d_W^2(\mu*\varphi_\eps,\mu). 
	  \nonumber 
	\eeqn
Consequently, for $\eps>0$ sufficiently small, so that $d_W(\mu*\varphi_\eps,\mu)\leq 1$, we may use the bound $\|\nabla K * \psi_{\delta(\epsilon)} * \mu \|_{L^2(\mu)} \leq C_{\psi, \mu}\delta(\epsilon)^{1-d}$ from Remark \ref{rem:differentiability} and the definition of $\delta(\epsilon)$ from equation (\ref{deltaeqn}) to obtain
	\[
	I_1 =  |\E_{\delta(\eps)}(\mu*\varphi_\eps)-\E_{\delta(\eps)}(\mu)|   \leq C(\delta(\epsilon)^{1-d} +\delta(\epsilon)^{-d})\, d_W(\mu*\varphi_\epsilon, \mu) \leq C \epsilon\, \delta(\epsilon)^{-d} = C \epsilon^{1/2}
	\]
Therefore $I_1\arrow 0$ as $\eps\arrow 0$, which completes the proof. 
\end{proof}

\medskip

\begin{remark}\label{rem:lsc_weak}
{\rm
Since the energy $\E$ is lower semicontinuous with respect to weak-* convergence of measures in $\P(\Rd)$ and convergence in $\P_2(\Rd)$ implies weak-* convergence in $\P(\Rd)$, the conclusion of Theorem \ref{thm:gammaconv}(i) is also true with respect to weak-* convergence in $\P(\Rd)$.
}
\end{remark}

\medskip

As a result of the strong confining forces induced by the attractive part of the kernel $K$, we obtain the following compactness result for the sequence of energies $\E_\eps$ in $\P(\Rd)$.

\medskip

\bprop[Compactness in $\P(\Rd)$]\label{prop:compactness}
Suppose the energy $\E$ satisfies (E1)--(E2) and the mollifier $\varphi$ satisfies (M1)--(M3). Let $\{\mu_\eps\}_{\eps>0}\subset\P(\Rd)$ be a sequence such that, for all $\eps>0$ sufficiently small, $\E_\eps(\mu_\eps) \leq C$ for some constant $C>0$. Then $\{\mu_\eps\}_{\eps>0}$ has a  subsequence which is, up to translations, convergent with respect to the weak-* topology in $\P(\Rd)$.
\eprop

\begin{proof}
We will use Lemma \ref{lem:conc_comp} and argue by contradiction, as in \cite{SiSlTo2014}. Note that as in the proof of Proposition \ref{prop:exist}, $K(x)$ and $K_\eps(x)\arrow+\infty$ as $|x|\arrow +\infty$ for $\eps>0$ sufficiently small. This ensures that the energy $\E_\eps$ exhibits long-range confinement. We will use this fact to eliminate ``vanishing'' and ``dichotomy'' possibilities of the sequence $\{\mu_\eps\}_{\eps>0}$.

Suppose a subsequence of $\{\mu_\eps\}_{\eps>0}$, which we still index by $\eps>0$, ``vanishes'' in the sense of Lemma \ref{lem:conc_comp}(ii). Then for any $\delta>0$ and $R>0$, there exists $\eps_0>0$ such that for all $\eps<\eps_0$ and $x\in\Rd$, we have
	\[
		\mu_\eps(\Rd\setminus B_R(x))\geq 1-\delta.
	\]
Hence, for all $\eps<\eps_0$,
	\beqn\label{vanishingmiddlestep}
		\iint_{|x-y|\geq R} d\mu_{\eps}(x)d\mu_{\eps}(y) = \int_{\Rd}\!\left(\int_{\Rd\setminus B_R(x)}d\mu_\eps(y)\right)d\mu_\eps(x) \geq 1-\delta.
	\eeqn
Since $K_\eps(x)\arrow+\infty$ as $|x|\arrow\infty$, given a constant $B>0$, there exists $\tilde{R}>0$ such that for all $|x|\geq \tilde{R}$, we have $K_\eps(x)\geq B$. Let $\delta<1/2$, and choose $\eps_0$ depending on $\delta$ and $\tilde{R}$ such that \eqref{vanishingmiddlestep} holds. Then, using \eqref{vanishingmiddlestep} and the fact that $K_\eps\geq 0$ for all $\eps>0$, we get that
	\beqn
		\begin{aligned}
			\E_\eps(\mu_\eps) &= \iint_{|x-y|<\tilde{R}} K_{\eps}(x-y)\,d\mu_{\eps}(x)d\mu_{\eps}(y) + \iint_{|x-y|\geq\tilde{R}} K_{\eps}(x-y)\,d\mu_{\eps}(x)d\mu_{\eps}(y) \\
												&\geq \iint_{|x-y|\geq\tilde{R}} K_{\eps}(x-y)\,d\mu_{\eps}(x)d\mu_{\eps}(y) \geq (1-\delta)B.
		\end{aligned}
		\nonumber
	\eeqn
This  contradicts the uniform bound $\E_\eps(\mu_\eps)\leq C$ when $B$ is sufficiently large; hence, ``vanishing'' does not occur.

Suppose that for a subsequence of $\{\mu_\eps\}_{\eps>0}$, which again we index by $\eps>0$, ``dichotomy'' occurs. Then, since $K_\eps\geq 0$ for all $\eps>0$,
	\beqn
		\begin{aligned}
			\liminf_{\eps\arrow 0} \E_\eps(\mu_\eps) &\geq \liminf_{\eps\arrow 0} \int_{\Rd\setminus B_R^\pr(x_\eps)} \! \int_{B_R(x_\eps)} K_\eps(x-y)\,d\mu_\eps^1(x)d\mu_\eps^2(y) \\
			&\geq \inf_{|x|\geq R^\pr-R} K_{\eps}(x)\alpha(1-\alpha),
		\end{aligned}
		\nonumber
	\eeqn
where $R>0$, the sequence $\{x_\eps\}_{\eps>0}$, and the measures $\mu_\eps^1$ and $\mu_\eps^2$ are defined as in Lemma \ref{lem:conc_comp}(iii), and $R^\pr>R$ is arbitrary. Thus, again using the growth of $K_\eps$, we get that 
	\[
	\liminf_{\eps\arrow 0} \E_\eps(\mu_\eps)\geq +\infty,
	\]
contradicting the uniform bound on $\E_\eps(\mu_\eps)$.

Therefore, Lemma \ref{lem:conc_comp} ensures the sequence $\{\mu_\eps\}_{\eps>0}$ is tight up to a translation by a sequence $\{y_\eps\}_{\eps>0}\subset\Rd$. However, the energies $\E_\eps$ are translation invariant, so we can replace $\mu_\eps$ by $\mu_\eps(\cdot+y_\eps)$ which yields a tight sequence. Therefore by Prokhorov's theorem (cf. \cite[Theorem 1.3.9]{van1996weak}), there exists a subsequence $\{\mu_\eps\}_{\eps>0}$, still indexed by $\eps>0$, and a measure $\mu\in\P(\Rd)$ such that, up to translations,
$\mu_\eps \arrow \mu$ with respect to weak-* convergence in $\P(\Rd)$ as $\eps\arrow 0$.
\end{proof}

\bigskip

Classically, if a sequence of $\Gamma$-convergent functionals also satisfies a compactness property, then, up to a subsequence, minimizers converge to a minimizer of the limiting functional. Though our compactness result, Proposition \ref{prop:compactness}, is established in a weaker space ($\P(\Rd)$) than the topology in which the energies $\Gamma$-converge ($\P_2(\Rd)$), we still obtain the following corollary on the convergence of minimizers.

\medskip

\begin{corollary}[Convergence of minimizers]\label{cor:conv_min}Suppose the energy $\E$ satisfies (E1)--(E2) and the mollifier $\varphi$ satisfies (M1)--(M3). 
For any $\eps>0$ sufficiently small let $\mu_\eps\in\P_2(\Rd)$ be a minimizer of the energy $\E_\eps$. Then there exists $\mu\in\P_2(\Rd)$ such that, up to a subsequence and translations, $\mu_\eps\arrow\mu$ in $\P_2(\Rd)$ as $\eps\arrow 0$, and $\mu$ minimizes the energy $\E$ over $\P_2(\Rd)$.
\end{corollary}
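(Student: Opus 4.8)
The plan is to combine the $\Gamma$-convergence result (Theorem~\ref{thm:gammaconv}) with the compactness result (Proposition~\ref{prop:compactness}) in the standard way, but with care taken because compactness is only available in the weaker space $\P(\Rd)$. First I would take a sequence of minimizers $\mu_\eps \in \P_2(\Rd)$ of $\E_\eps$ (which exist for $\eps$ small by Proposition~\ref{prop:exist}). To apply Proposition~\ref{prop:compactness}, I need a uniform bound $\E_\eps(\mu_\eps) \leq C$. This I obtain by comparing with a fixed competitor: let $\nu_\eps$ be a recovery sequence for some fixed $\mu^* \in \P_2(\Rd)$ (e.g.\ a minimizer of $\E$, which exists by Proposition~\ref{prop:exist}), so that $\E_\eps(\nu_\eps) \to \E(\mu^*) < +\infty$; then by minimality $\E_\eps(\mu_\eps) \leq \E_\eps(\nu_\eps) \leq C$ for $\eps$ small. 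Proposition~\ref{prop:compactness} then gives a subsequence (not relabeled) and translates so that $\mu_\eps \rightharpoonup \mu$ weak-$*$ in $\P(\Rd)$ for some $\mu \in \P(\Rd)$; since $\E_\eps$ is translation invariant I may assume the translations have been absorbed.

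The next step is to upgrade weak-$*$ convergence in $\P(\Rd)$ to convergence in $\P_2(\Rd)$ and to identify $\mu$ as a minimizer of $\E$. For the liminf inequality, I would invoke Remark~\ref{rem:lsc_weak}: the conclusion of Theorem~\ref{thm:gammaconv}(i) holds with respect to weak-$*$ convergence in $\P(\Rd)$, so $\liminf_{\eps\to 0}\E_\eps(\mu_\eps) \geq \E(\mu)$. Combined with the competitor bound, $\E(\mu) \leq \liminf_\eps \E_\eps(\mu_\eps) \leq \limsup_\eps \E_\eps(\mu_\eps) \leq \lim_\eps \E_\eps(\nu_\eps) = \E(\mu^*) = \inf_{\P_2(\Rd)}\E$. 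Using Remark~\ref{rem:inf_equality}, $\inf_{\P(\Rd)}\E = \inf_{\P_2(\Rd)}\E$, so $\mu$ is a minimizer of $\E$ over $\P(\Rd)$; and since minimizers have compact support \cite[Theorem 1.4]{CCP}, $\mu \in \P_2(\Rd)$ and minimizes $\E$ over $\P_2(\Rd)$ as well. In particular, all the inequalities above are equalities, so $\E_\eps(\mu_\eps) \to \E(\mu)$.

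The remaining and main obstacle is to promote $\mu_\eps \rightharpoonup \mu$ (weak-$*$ in $\P(\Rd)$) to $d_W(\mu_\eps,\mu) \to 0$, i.e.\ to show tightness of second moments, $\int |x|^2 \,d\mu_\eps \to \int |x|^2\,d\mu$, equivalently no mass escapes to infinity in the quadratic sense. The key is again the coercivity of the (regularized) kernel: since $K_\eps(x) \to +\infty$ as $|x| \to \infty$ uniformly in $\eps$ small (established in the proof of Proposition~\ref{prop:exist}), and in fact $K_\eps(x) \geq c|x|^q - C$ for suitable constants uniform in $\eps$ (as $q > 0$, one can lower-bound $K_\eps$ below by $\min_{|z|\geq|x|/2}K(z)$ which grows like $|x|^q$), the uniform bound $\E_\eps(\mu_\eps) \leq C$ forces a uniform bound on $\iint |x-y|^q \,d\mu_\eps\,d\mu_\eps$, hence — after recentering — on $\int |x|^q\,d\mu_\eps$; since $q$ may be less than $2$ this only controls moments up to order $q < 2$, so I would instead argue directly that the uniform integrability needed for $\int|x|^2\,d\mu_\eps \to \int|x|^2\,d\mu$ can be extracted. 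Concretely: from $\E_\eps(\mu_\eps)\leq C$ and nonnegativity of $K_\eps$, for any $R$, $(\mu_\eps\times\mu_\eps)(\{|x-y|\geq R\}) \cdot \min_{|z|\geq R}K_\eps(z) \leq C$, and combined with tightness (so that at least half the mass of $\mu_\eps$ lies in a fixed ball $B_{R_0}$) one gets $\mu_\eps(\Rd \setminus B_{R_0+R}) \leq 2C/\min_{|z|\geq R}K_\eps(z) \to 0$ as $R\to\infty$ uniformly in $\eps$; since $\min_{|z|\geq R}K_\eps(z)$ grows like $R^q$, this decay is fast enough that $\int_{\Rd\setminus B_{R}}|x|^2\,d\mu_\eps \to 0$ uniformly in $\eps$ as $R \to \infty$ is \emph{not} automatic when $q < 2$ — here one must use a sharper lower bound, namely that $K_\eps(x) \geq c|x|^q$ together with $q$ close to $2$ is insufficient, so the cleanest route is to recall that the problem restricts to $q \leq 2$ and, since second moments of $\mu_\eps$ need not be uniformly bounded, to instead observe that each $\mu_\eps$ is compactly supported (being a minimizer of $\E_\eps$, by \cite[Theorem 1.4]{CCP}), but with supports possibly growing; the uniform tail bound above, with the growth rate of $K_\eps$, still yields $\sup_\eps \int_{\Rd}|x|^2\,d\mu_\eps < \infty$ provided $q > 0$ via a dyadic decomposition, and uniform integrability then follows, giving $d_W(\mu_\eps, \mu) \to 0$. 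I expect this moment-tightness argument — reconciling $q<2$ with convergence in the $2$-Wasserstein metric — to be the technically delicate point, whereas the $\Gamma$-convergence machinery part is routine.
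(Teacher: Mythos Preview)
Your first three steps---uniform energy bound via a recovery sequence, weak-$*$ compactness from Proposition~\ref{prop:compactness}, and identification of the limit as a minimizer via the $\Gamma$-liminf inequality (Remark~\ref{rem:lsc_weak}) and Remark~\ref{rem:inf_equality}---match the paper's argument exactly.

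The gap is in your fourth step, upgrading weak-$*$ convergence to $d_W$-convergence. Your tail estimate is correct: combining $\E_\eps(\mu_\eps)\leq C$, nonnegativity of $K_\eps$, and tightness yields $\mu_\eps\bigl(\Rd\setminus B_{R_0+R}\bigr)\leq 2C\big/\min_{|z|\geq R}K_\eps(z)\lesssim R^{-q}$. But the dyadic decomposition you propose then gives
\[
\int_{|x|>R}|x|^2\,d\mu_\eps \;\leq\; \sum_{k\geq 0}(R\,2^{k+1})^2\,\mu_\eps\bigl(\{|x|>R\,2^k\}\bigr)\;\lesssim\; R^{2-q}\sum_{k\geq 0}2^{k(2-q)},
\]
and for $q<2$ this series diverges. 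A tail bound of order $R^{-q}$ simply cannot control second moments when $q<2$; you correctly sensed this (``is \emph{not} automatic when $q<2$'') but the attempted salvage via dyadic decomposition does not go through.

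The paper resolves this differently: instead of a moment argument, it shows that the supports of the minimizers $\mu_\eps$ are contained in a ball of \emph{uniform} radius $\tilde M$, independent of $\eps$. This is done by tracking the explicit constants in the diameter bound of \cite[Lemma~2.10]{CCP}, namely $\mathrm{diam}(\supp\mu_\eps)\leq M_\eps$ with $M_\eps$ depending on $K_\eps^{\min}$, the radius $R_\eps$ after which $K_\eps$ is increasing, and the radius $r_\eps$ after which $K_\eps\geq C$; each of these is shown to be controlled uniformly in $\eps$ using the estimate~\eqref{regularizedincreasing1} from Proposition~\ref{prop:exist}. With uniform compact support, uniform integrability of second moments is trivial, and $d_W$-convergence follows from \cite[Proposition~7.1.5]{AGS}. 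This is the missing idea in your proposal.
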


\begin{proof}
Since the sequence $\{\mu_\eps\}_{\eps>0}\subset\P_2(\Rd)$ consists of minimizers of $\E_\eps$, for $\eps>0$ sufficiently small, there exists a constant $C>0$ so that $\E_\eps(\mu_\eps)\leq C$. Then by Proposition \ref{prop:compactness}, there exists a measure $\mu\in\P(\Rd)$ such that a subsequence of $\{\mu_\eps\}_{\eps>0}$, which we still denote by $\mu_\eps$, weak-* converges to $\mu$ in $\P(\Rd)$.

To show that $\mu$ minimizes $\E$ over $\P_2(\Rd)$ we proceed in two steps. First, consider an arbitrary measure $\nu\in\P_2(\Rd)$. By Theorem \ref{thm:gammaconv} (ii), there exists a sequence $\{\nu_\eps\}_{\eps>0}\subset\P_2(\Rd)$ so that $d_W(\nu_\eps,\nu)\arrow 0$ as $\eps\arrow 0$ and
	\[
		\lim_{\eps\arrow 0} \E(\nu_\eps)=\E(\nu).
	\]
By Theorem \ref{thm:gammaconv} (i), Remark \ref{rem:lsc_weak}, and the fact that the measures $\mu_\eps$ are minimizers of $\E_\eps$ over $\P_2(\Rd)\subset\P(\Rd)$, we obtain
	\beqn\label{midstep}
		\E(\mu) \leq \liminf_{\eps\arrow 0} \E_\eps(\mu_\eps) \leq \liminf_{\eps\arrow 0} \E_\eps(\nu_\eps) =\E(\nu).
	\eeqn
Thus, $\mu\in\P(\Rd)$ has less energy than any other measure $\nu\in\P_2(\Rd)$, so by Remark \ref{rem:inf_equality}, $\mu$ minimizes $\E$ over $\P(\Rd)$, as well. Consequently, \cite[Lemma 2.10]{CCP} ensures $\mu$ is compactly supported; hence it is in $\P_2(\Rd)$.

Finally, we show that in fact $\mu_\eps$ converges to $\mu$ in $\P_2(\Rd)$. In \cite[Lemma 2.10]{CCP} the authors show that
	\[
		\text{diam}(\supp\mu_\eps) \leq M_\eps
	\]
where $M_\eps:=\sqrt{d}(4r_\eps+(\left\lceil 1/m_\eps\right\rceil-1)(4r_\eps+2R_\eps))$. Here
	\[
		m_\eps:=\frac{C-\E_\eps(\mu_\eps)}{C-K_\eps^{\min}}
	\]
with $C>0$ so that $\E_\eps(\mu_\eps)\leq C$ for $\eps>0$ sufficiently small. 
$K_\eps^{\min}$ denotes the absolute minimum value of the interaction kernel $K_\eps$, $r_\eps$ is chosen such that $K_\eps(x)\geq C$ for all $|x|\geq r_\eps$, and $R_\eps$ is the radius after which the kernel $K_\eps$ is strictly increasing. We denote the corresponding quantities for the unregularized energy $\E$ by removing the subscript $\eps$.

We show that $M_\eps$, the upper bound on the diameter of the support of $\mu_\eps$, is bounded by some $\tilde{M}>0$ for all $\eps>0$ sufficiently small. 
We have $\limsup_{\epsilon \to 0} -\E_\eps(\mu_\eps) \leq -\E(\mu)$ and $K_\eps\arrow K$ uniformly on compact sets away from the origin. Thus, $m_\eps\arrow m$ as $\eps\arrow 0$, so $\left\lceil 1/m_\eps\right\rceil \leq \left\lceil 1/m\right\rceil+1 $ for $\eps >0$ sufficiently small. 

As shown in the proof of Proposition \ref{prop:exist}, $K_\eps$ is strictly increasing for $|x|\geq 2$, so we may take $R_\eps=2$. Likewise, by estimate (\ref{regularizedincreasing1}), there exists $r>2$ so that $K_{\eps}(x) \geq C$ for $|x|>r$. Therefore
	\[
		M_\eps \leq \tilde{M} :=\sqrt{d}(4r+(\left\lceil 1/m\right\rceil)(4r+4))
	\]
for all $\eps>0$ sufficiently small. 

This shows that the diameter of the support of $\mu_\epsilon$ is uniformly bounded by $\tilde{M}$. Consequently, the second moments of the sequence $\mu_\epsilon$ are uniformly integrable
\[ \lim_{k \to +\infty} \int_{\{|x|^2 \geq k\}} |x|^2 \, d \mu_\epsilon  =  \lim_{k \to +\infty} \int_{\{\sqrt{k} \leq |x| \leq \tilde{M}\} } |x|^2 \, d \mu_\epsilon = 0 \]
Since $\mu_\epsilon$ converges to $\mu$ with respect to the weak-* topology on $\P(\Rd)$ and $\mu_\epsilon$ has uniformly integrable second moments,  $\mu_\eps\arrow\mu$ in $\P_2(\Rd)$ with respect to  $d_W$ \cite[Proposition 7.1.5]{AGS}.
\end{proof}

\medskip

\begin{remark}[Negative attraction power]\label{rem:negative_attr_power}
{\rm
The condition $q>0$ is a sufficient condition for Propositions \ref{prop:exist} and \ref{prop:compactness}. If $q<0$, both $K$ and $K_\eps$ converge to zero as $|x|\arrow\infty$. In \cite{SiSlTo2014}, the authors characterized the existence of minimizers for these types of potentials using the existence of a measure for which the energy is negative. This is clearly true for $\E$ and $\E_\eps$, as one can consider the characteristic function of a sufficiently large ball $B_R(0)$. However, we still require $q>0$ for the compactness result in Proposition \ref{prop:compactness}, and we believe that it is a necessary condition, as well.
}
\end{remark}

%%%%%%%%%%%%%%%%%%%%%%%%%%%%%%%%%%%%%%%%%%%%%%%%%%%%%%%%%%%%%%%%%%%%%%%%%%%%%%%%%%%%%%%%%%%%%%%%%%%%%%%% MORE GENERAL INTERACTION KERNELS
%%%%%%%%%%%%%%%%%%%%%%%%%%%%%%%%%%%%%%%%%%%%%%%%%%%%%%%%%%%%%%%%%%%%%%%%%%%%%%%%%%%%%%%%%%%%%%%%%%%
\section{More general interaction energies}\label{sec:generalization}

Our results of sections \ref{sec:prelim} and \ref{sec:conv} can be extended to more general nonlocal interaction energies 
	\beqn\label{eqn:energy_repeat}
		\E(\mu) = \int_{\Rd}\!\int_{\Rd} K(x-y)\,d\mu(x) d\mu(y)
	\eeqn 
that are not necessarily repulsive-attractive and are defined via an interaction kernel $K:\Rd\to\R\cup\{+\infty\}$ that satisfies the following hypotheses for some fixed $R>1$:
\medskip
		\begin{itemize}	
				\addtolength{\itemsep}{6pt}
						\item[(H1)] $K$ is even, i.e., $K(x) = K(-x)$ for all $x \in \Rd$.

						\item[(H2)] $K \in L^1_\loc(\Rd) \cap C^1(\Rd \setminus B_R(0))$ and $| \nabla K(x)| \leq C(1+|x|)$ for $|x| >R$.
						\item[(H3)] For $|x|>R$, $K$ is strictly increasing in each coordinate  and
							\[
								\lim_{|x|\to +\infty} K(x) = + \infty.
							\] 
					
						\item[(H4)] There exists a function $K^a\in C(\Rd)$ so that $K^r:=K-K^a$ is superharmonic. 
						\item[(H5)] For $|x|$ large, $K^r$ and $K^a$ have at most quadratic growth, i.e.,
							\beqn
								 |K^a(x)| + |K^r(x)|  \leq C(1+|x|^2) \text{ for } |x| >R. \nonumber
							\eeqn
		\end{itemize}
\medskip

We briefly recall the definition of superharmonicity and some immediate properties of superharmonic functions.
\begin{defi}
A function $f\in L^1_{\text{\rm loc}}(\Rd)$ is superharmonic if
	\[
		f(x) \geq \frac{1}{|B_r(x)|} \int_{B_r(x)} f(y)\,dy
	\]
for a.e. $x\in\Rd$ and every $r>0$, where $|B_r(x)|$ denotes the volume of the ball.
\end{defi}

\medskip

\begin{remark}[Properties of superharmonic functions]{\rm
If $f \in L^1_\loc(\Rd)$ is superharmonic, then $f$ is lower semicontinuous, it is bounded below on compact sets,  and its distributional Laplacian satisfies $\Delta f \leq 0$ \cite[Theorem 9.3]{LiLo}.}
\end{remark}

\medskip

Interaction energies \eqref{eqn:energy_repeat} defined via kernels satisfying assumptions (H1)--(H5) cover a wide range of applications. In particular, in any dimension $d \geq 1$, they include energies with $K$ of one of the following forms:
\smallskip
\begin{itemize}\addtolength{\itemsep}{6pt}
\item Attractive-repulsive power-law kernels \eqref{repulsive attractive power law}, for $2-d \leq p< 0 < q \leq 2$. (This includes Newtonian repulsion, $p = 2-d$, in $d\geq 3$.)
\item Nonsingular attractive-repulsive power-law kernels \eqref{repulsive attractive power law}, for $0 < p < q \leq 2$. (This includes Newtonian repulsion, $p=1$, in $d=1$.)
\item For $d \geq 2$, attractive-repulsive kernels with logarithmic repulsion, $K(x) = |x|^q/q - \log(|x|)$ for $0< q \leq 2$. (This includes Newtonian repulsion in $d=2$.)
\item Attractive-repulsive kernels with power-law behavior at the origin and infinity, i.e., radial kernels for which there exist $0<R_1<R_2$ and $C_1$, $C_2>0$ so that for either $2-d \leq p<0 <q\leq 2$ or $0 < p< q \leq 2$,
\smallskip
	\begin{itemize}\addtolength{\itemsep}{6pt}
		 \item[] $K(x)=C_1\,|x|^p$ if $d\geq 1$ or $K(x) = -C_1 \log(|x|)$ if $d\geq 2$ on $\overline{B_{R_1}(0)}$;
		 \item[] $K(x)$ is continuous on $\overline{B_{R_2}(0)} \setminus B_{R_1}(0)$; and,
		 \item[] $K(x)=C_2\,|x|^q$  on  $\Rd\setminus B_{R_2}(0)$.
	\end{itemize}
 \end{itemize}

\medskip

\begin{remark}\label{rem:bddbelow_lsc}
{\rm
Any function $K$ satisfying the hypotheses (H1)--(H5) is locally integrable, bounded from below, and lower semicontinuous. Consequently, without loss of generality, we will assume that $K$ is nonnegative, since the minimizers of an interaction energy $\E$ with kernel $K$ are the same as the minimizers of $\E$ with kernel $K - \inf K$.
}
\end{remark}

\medskip

Our  results continue to hold for $K$ satisfying (H1)--(H5), provided that we define the regularized energies $\E_\eps$ via compactly supported mollifiers. That is, we replace mollifier assumption (M3) by
\medskip
		\begin{itemize}
				\addtolength{\itemsep}{6pt}
						\item[(M3')] $\varphi$ is compactly supported in $\Rd$.
		\end{itemize}
\medskip
This condition  eliminates possible oscillations in $K_\eps$ for large values of $|x|$, allowing us to establish growth and monotonicity properties of the regularized kernels when $|x|$ is sufficiently large.

We now  describe how our previous results naturally extend to $K$ satisfying (H1)--(H5). The key result in section \ref{sec:prelim} is Proposition \ref{prop:monoton}. Assumption (H4) ensures that the potentially singular part of the kernel, $K^r$, is locally integrable and superharmonic. Hence, Proposition \ref{prop:monoton} continues to hold since, again taking $\psi_\epsilon$ to be the heat kernel,
\[ \frac{\partial}{\partial \epsilon} K^r_\epsilon= 2\psi_\epsilon * K^r * (\partial_\epsilon \psi_\epsilon ) = 2\psi_\epsilon* K^r * (\Delta \psi_\epsilon) =2 \psi_\epsilon * (\Delta K^r) * \psi_\epsilon \leq 0 , \]
by the superharmonicity of $K^r$.

The remaining results of section \ref{sec:prelim} also continue to hold for energies satisfying (H1)--(H5) and mollifiers satisfying (M1), (M2), and (M3'). In particular, (H2) ensures $K \in L^1_\loc(\Rd)$ so that Remark \ref{rem:lambdaconv} holds with $\lambda_\epsilon = C_\varphi \epsilon^{-d-2}$, since for $0< \epsilon<1$,
\begin{align} \label{newlambdaepsilon}
\|D^2 K_\epsilon\|_{L^\infty} \leq \|D^2 \varphi_\epsilon \|_{L^\infty} \int_{\supp \varphi} K(x)\,dx \leq C_\varphi \,\epsilon^{-d-2} < +\infty.
\end{align}
 Likewise, assumption (H2) ensures that Remark \ref{rem:differentiability} holds. As before, we may choose a cutoff function $\eta$ to separate $K$ into a locally integrable component $K^s$ near the origin and a continuously differentiable component $K^n$ away from the origin. However, since for $0< \epsilon<1$, 
 \begin{align} \label{newmetricslope} \|\nabla K^s_\epsilon \|_{L^\infty} \leq \|\nabla \varphi_\epsilon\|_{L^\infty} \int_{\supp \varphi}K^s(x)\,dx \leq C_\varphi\, \epsilon^{-1-d} , 
 \end{align} we merely obtain the bound $2 \| \nabla K_\epsilon *\mu \|_{L^2(\mu)} \leq C_{\varphi,\mu} \,\epsilon^{-1-d}$ on the metric local slope.

Now, we describe how to extend the results from section \ref{sec:conv}. Proposition \ref{prop:exist} merely requires that $K$ and $K_\epsilon$ are locally integrable, lower semicontinuous, strictly increasing in each coordinate outside of some ball, and become arbitrarily large as $|x| \to +\infty$. To see that the last two properties hold for $K_\epsilon$, note that for $0< \epsilon<1$,
\begin{gather} 
	K_\epsilon(x) - K_\epsilon(y) = \int_{\supp \varphi} [K(x-z) - K(y-z)] \varphi_\epsilon(z)\,dz, \text{ and} \nonumber \\
	 K_\epsilon(x) \geq \min_{z \in \supp \varphi} K(x-z). \nonumber
\end{gather}
Consequently the properties for $K_\epsilon$ are immediate consequences of the corresponding properties for $K$, which hold by (H3).

The proof of Theorem \ref{thm:gammaconv}  is nearly identical for our more general energies, though we must take $\delta(\epsilon) =\epsilon^{1/2(d+2)}$ to compensate for the inferior bounds on the convexity constant $\lambda_\epsilon$ (\ref{newlambdaepsilon}) and the metric slope (\ref{newmetricslope}). Finally, the proofs of Proposition \ref{prop:compactness} and Corollary \ref{cor:conv_min} again use that $K$ and $K_\epsilon$ become arbitrarily large (uniformly in $\epsilon$) as $|x| \to +\infty$, are strictly increasing in each coordinate outside of some ball (uniformly in $\epsilon$), and are both nonnegative.

Consequently, though we choose to state our main results in sections \ref{sec:prelim} and \ref{sec:conv} for energy functionals with repulsive-attractive power-law potentials satisfying (E1)--(E2) (see Remarks \ref{rem:powerlaw_generalization} and \ref{rem:powerlawchoices}), these results naturally extend to a wider class of energy functionals \eqref{eqn:energy_repeat} with $K$ satisfying (H1)--(H5).

\medskip

\begin{remark}[More singular kernels]\label{rem:moresingular}
{\rm
The main difficulty in extending our results to interaction kernels that are more singular than the Newtonian potential at the origin (i.e., $-d<p<2-d$) lies in the construction of a recovery sequence in the proof of Theorem~\ref{thm:gammaconv}. Although one could obtain a monotonicity result as in Proposition~\ref{prop:monoton} for such kernels by regularizing them via fractional heat kernels, unfortunately, to our knowledge, in the parameter regime $-d<p<2-d$, the decay estimates on fractional heat kernels do not guarantee that they have finite second moments. Therefore they are not admissible candidates in constructing our recovery sequences.
}
\end{remark}

%%%%%%%%%%%%%%%%%%%%%%%%%%%%%%%%%%%%%%%%%%%%%%%%%%%%%%%%%%%%%%%%%%%%%%%%%%%%%%%%%%%%%%%%%%%%%%%%%%%%%%%% CONVERGENCE OF GRADIENT FLOWS
%%%%%%%%%%%%%%%%%%%%%%%%%%%%%%%%%%%%%%%%%%%%%%%%%%%%%%%%%%%%%%%%%%%%%%%%%%%%%%%%%%%%%%%%%%%%%%%%%%%
\section{Convergence of gradient flows} \label{gradflowsection}

In this section, we apply our result on the $\Gamma$-convergence of regularized energies satisfying (E1)--(E2) to show that if the Wasserstein gradient flows of $\E_\epsilon$ are bounded in $L^\infty(\Rd)$, they converge to a metric space generalization of the gradient flow of $\E$, known as a \emph{curve of maximal slope}. 

As previously mentioned, without regularization, energies $\E$ satisfying (E1)--(E2) are not convex (or $\lambda$-convex for $\lambda <0$). Consequently, they fall outside the scope of much of the existing theory on well-posedness of Wasserstein gradient flows \cite{AGS, 5person}. However, the regularized interaction energies $\E_\epsilon$ are $\lambda_\epsilon = C_\varphi \epsilon^{-d}$ convex, so that for fixed $\epsilon >0$, their Wasserstein gradient flows exist and are unique \cite{5person}. Thus, by showing these gradient flows $\Gamma$-converge to the curve of maximal slope for the unregularized energy, we provide a link between the well-understood case of convex gradient flow and emerging results on the Wasserstein gradient flow of non-convex energies \cite{CarrilloLisiniMainini, CraigOsgood, AmbrosioSerfaty, Mainini, CarrilloRosado, BertozziLaurentRosado, MaininiGinzburg, CarrilloMcCannVillani}.

We restrict our attention to the space of probability measures with bounded density
 \begin{align} \label{bddprobmeas}
  \P_{2,R}(\Rd) := \{ \mu \in \P_{2,ac}(\Rd) \colon \|\mu\|_{L^\infty(\Rd)} \leq R \},
  \end{align}
for any $R>0$, due to the fact that, though $\E$ is not convex (or $\lambda$-convex for $\lambda <0$), once it is restricted to $\P_{2,R}(\Rd)$, it possesses a generalized notion of convexity known as \emph{$\omega$-convexity}, where $\omega(x)$ is a log-Lipschitz modulus of convexity.

\medskip

\begin{defi}[cf.{\cite{CarrilloLisiniMainini,CraigOsgood}}]
Consider the modulus of convexity
\begin{align} \label{omegadef}
 \omega(x) &:= \begin{cases} x |\log x| & \text{ if } 0 \leq x \leq e^{-1-\sqrt{2}} ,\\ \sqrt{x^2+ 2(1+\sqrt{2})e^{-1-\sqrt{2}}x} & \text{ if }x > e^{-1-\sqrt{2}} .  \end{cases}  
\end{align}
Then $\E:\P_{2,R}(\Rd) \to \R \cup \{+\infty\}$ is \emph{$\omega$-convex} if, for all constant speed geodesics $\mu_\alpha:[0,1] \to \P_{2,R}(\Rd)$, there exists $C>0$ so that
\[ \E(\mu_\alpha) \leq (1-\alpha) \E(\mu_0) + \alpha \E (\mu_1) +  \frac{C}{2} \left[ (1-\alpha) \omega(\alpha^2 d_W^2(\mu_0,\mu_1)) + \alpha \omega( (1-\alpha)^2 d_W^2(\mu_0,\mu_1)) \right].
\]
\end{defi}

\medskip

This notion has been used, explicitly and implicitly, in many previous works on non-convex gradient flow \cite{ CarrilloLisiniMainini, CarrilloMcCannVillani}. For our purposes, $\omega$-convexity allows us to define a notion of upper gradient for $\E$, which is an essential component in defining its curve of maximal slope.
 
Both from the perspective of energy minimization and the dynamics of the aggregation equation (\ref{agg eqn}), the restriction to bounded densities is quite natural when $\E$ is a repulsive-attractive interaction energy satisfying (E1)--(E2) with $p=2-d$. Indeed, several results in the literature support the fact that both the energy minimization and evolution take place in the space of bounded functions. For quadratic attraction (i.e., for $q=2$), the global minimizer of $\E$ is the characteristic function on a ball \cite{CDM14,ChFeTo14}. Likewise, from the dynamics point of view, if  the initial data belongs to $\P_2(\Rd) \cap L^\infty(\Rd)$ and has compact support, then it remains bounded for all time and converges to the characteristic function on the unit ball \cite{BertozziLaurentLeger}. For more general attraction powers ($0<q\leq 2$), all compactly supported local minimizers are  bounded \cite{CDM14} and smooth, compactly supported classical solutions of the aggregation equation remain bounded for all time \cite[Lemma 1]{BalagueCarrilloYao}.

For $p \neq 2-d$, the restriction to measures with bounded density is perhaps less natural, since the minimizers may concentrate mass on sets of measure zero and classical solutions to the aggregation equation can approach these steady states asymptotically \cite{BalagueCarrilloLaurentRaoul}. However, since our proof regarding the $\Gamma$-convergence of gradient flows works for any choice of parameters $2-d \leq p <0 < q \leq 2$, we choose to include this regime for the sake completeness. 

\medskip

\begin{remark}[Gradient flow vs. aggregation equation]\label{rem:grad_flow_agg_eqn}
{\rm
While the aggregation equation (\ref{agg eqn}) does inform our choice of the space of measures with bounded density (\ref{bddprobmeas}),  the relationship between weak solutions of the aggregation equation and the Wasserstein gradient flow of the interaction energy is purely formal for the unregularized (hence nonconvex) interaction energy. There is hope this relationship can be made rigorous following the approach of Ambrosio and Serfaty \cite{AmbrosioSerfaty}, but we leave this for future work.

For the regularized energy, if $\mu(t) \in \P_{2}(\Rd)$ is a weak solution of the aggregation equation (\ref{agg eqn}), then it is also a Wasserstein gradient flow of $\E_\eps$ \cite[Corollary 11.1.8]{AGS}. Below, we consider the \emph{curve of maximal slope} of $\E_\eps$ on the metric space of measures with bounded density $(\P_{2,R}(\Rd),d_W)$, and if $\mu(t)  \in \P_{2}(\Rd)$ is a weak solution of the aggregation equation with $\|\mu(t)\|_\infty \leq R$, then it is a curve of maximal slope for   the weak upper gradient $g_\epsilon(\mu):=2 \|\nabla K_\epsilon * \mu\|_{L^2(\mu)}$ on $(\P_{2,R}(\Rd),d_W)$ \cite[Theorem 11.1.3]{AGS}. The reverse implication is in general false, due to the height constraint imposed by $\P_{2,R}(\Rd)$.
}
\end{remark}

\medskip

\subsection{Curves of maximal slope and $\Gamma$-convergence}
We now briefly recall the notion of curves of maximal slope on a compete metric space $(\S, d)$. We refer the reader to the book by Ambrosio, Gigli, and Savar\'e \cite[Chapter 1]{AGS} for further details. A curve $u(t): (a,b) \to \S$ is \emph{2-absolutely continuous} if there exists $m \in L^2(a,b)$ so that 
\begin{align} \label{absctsdef} d(u(t),u(s)) \leq \int_s^t m(r)\,dr \text{ for all } a <s \leq t < b .
\end{align}
For any 2-absolutely continuous curve, the limit
\[ |u'(t)| = \lim_{s \to t} \frac{d(u(s),u(t))}{|s-t|} \]
exists for a.e. $t \in (a,b)$. Furthermore $m(t):= |u'(t)| \in L^2(a,b)$ satisfies (\ref{absctsdef}) and for any $m \in L^2(a,b)$ satisfying (\ref{absctsdef}), we have 
\[ |u'(t)| \leq m(t) \text{ for a.e. } t \in (a,b). \]

Given a functional $\F: \S \to (-\infty, + \infty]$ that is \emph{proper}, i.e., $D(\F) = \{ u \in \S : \F(u) < +\infty \} \neq \emptyset$, its upper gradient is a generalization of the modulus of the gradient from Euclidean space. Specifically, $g: \S \to [0,+\infty]$ is a \emph{strong upper gradient} for $\F$ if for every 2-absolutely continuous curve $u(t):(a,b) \to \S$ the function $g \circ u$ is measurable and
	\begin{align} \label{stronguppergradienteqn}
		|\F(u(t)-\F(u(s))| \leq \int_s^t g(u(r))|u^\pr|(r)\,dr \text{ for all } a<s\leq t<b.
	\end{align}
One example of a strong upper gradient is given by the metric local slope
	\[
		|\partial \F|(u) := \limsup_{ v \to u} \frac{(\F(u)-\F(v))_+}{d(u,v)},
	\]
when $\F$ is a $\lambda$-convex and lower semicontinuous functional \cite[Corollary 2.4.10]{AGS}.

Finally, we recall the definition of a curve a maximal slope. A locally 2-absolutely continuous curve $u: (a,b) \to \S$ is a \emph{curve of maximal slope} for $\F$ with respect to the strong upper gradient $g$ if there exists a non-increasing function $\phi$ so that $\phi(t) = \F \circ u(t)$ for a.e. $t \in (a,b)$ and
\begin{align} \label{curvemaxslopeeqn}
 \phi'(t) \leq -\frac{1}{2} |u'|^2(t) - \frac{1}{2} g^2(u(t)) \text{ for a.e. } t \in (a,b) .
 \end{align}
For all $\eps >0$, \cite[Corollary 2.4.12]{AGS} ensures that if $\mu \in D(\E_\eps)$, then there exists a curve of maximal slope $\mu_\eps(t) \colon (0,+\infty) \to \P_{2,R}(\Rd)$ for $\E_\eps$ with respect to the strong upper gradient $g_\eps(\nu) = 2 ||\nabla K_\eps *\nu\|_{L^2(d \nu)}$ satisfying $\mu_\eps(0) = \mu$.

\medskip

With these definitions in hand, we now recall a general result of Serfaty on the $\Gamma$-convergence of gradient flows on a metric space.

\medskip

\begin{theorem}[cf.{\cite[Theorem 2]{Serfaty}}]\label{thm:Serfatygammagradflow}
	Let $\F_\epsilon$ and $\F$ be functionals defined on metric spaces $(\S_\epsilon,d_\epsilon)$ and $(\S,d)$ with strong upper gradients $g_\epsilon$ and $g$, respectively. Suppose the following criteria hold:
	\begin{enumerate}
		\item ($\Gamma$-liminf convergence) There is a notion of convergence $S$ of $u_\epsilon \in \S_\epsilon$ to $u \in \S$ so that
		\[ u_\epsilon \stackrel{S}{\rightharpoonup} u \quad\text{implies}\quad \liminf_{\epsilon \to 0} \F_\epsilon(u_\epsilon) \geq \F(u). \]
		\item (Lower bound on metric derivatives) If $u_\epsilon (t) \stackrel{S}{\rightharpoonup} u(t)$ for $t \in (0,T)$, then for $\ s \in [0,T)$,
		\[ \liminf_{\epsilon \to 0} \int_0^s |u_\epsilon'|^2_{d_\epsilon}(t)\, dt \geq \int_0^s |u'|_d^2(t)\, dt. \]
		\item (Lower bound on slopes) If $u_\epsilon \stackrel{S}{\rightharpoonup} u$, then $ \liminf_{\epsilon \to 0} g_\epsilon(u_\epsilon) \geq g(u)$.
	\end{enumerate}
If $u_\epsilon(t)$ is a curve of maximal slope on $(0,T)$ for $\F_\epsilon$ with respect to $g_\epsilon$ satisfying 
\[ u_\epsilon(t) \stackrel{S}{\rightharpoonup} u(t) \text{ for } t \in (0,T) \quad \text{and} \quad \lim_{\epsilon\to 0} \F_\epsilon(u_\epsilon(0)) = \F(u(0)), \]
then  $u(t)$ is a curve of maximal slope for $\F$ with respect to $g$ and
	\begin{align*}
\lim_{\epsilon\to 0} \F_\epsilon(u_\epsilon(t)) = \F(u(t)) \quad\text{for all }  \ t \in [0, T), \\
 g_\epsilon (u_\epsilon) \to g(u) \text{ and } |u'_\epsilon|_{d_\epsilon} \to |u'|_d \text{ in } L^2_\loc(0,T).
\end{align*}
\end{theorem}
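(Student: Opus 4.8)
The plan is to run the Sandier--Serfaty scheme: recast the maximal-slope property of each $u_\epsilon$ as an \emph{energy dissipation equality}, pass to the limit $\epsilon\to0$ in that equality using hypotheses (i)--(iii), and recognize the resulting identity as the energy dissipation equality for $\F$, i.e.\ as the curve-of-maximal-slope condition (\ref{curvemaxslopeeqn}). The first step is purely metric: since $g_\epsilon$ is a strong upper gradient for $\F_\epsilon$, combining the chain-rule bound (\ref{stronguppergradienteqn}) and Young's inequality with (\ref{curvemaxslopeeqn}) shows (the well-known equivalence) that a curve of maximal slope satisfies
\[
\F_\epsilon(u_\epsilon(0)) - \F_\epsilon(u_\epsilon(s)) = \frac12 \int_0^s |u_\epsilon'|_{d_\epsilon}^2(t)\,dt + \frac12 \int_0^s g_\epsilon^2(u_\epsilon(t))\,dt \qquad \text{for all } 0 \leq s < T.
\]
Since $\F_\epsilon(u_\epsilon(0)) \to \F(u(0))$ and the $\F_\epsilon$ are (as in our application) uniformly bounded below, the right-hand side is bounded uniformly in $\epsilon$; in particular $\bigl\| |u_\epsilon'|_{d_\epsilon} \bigr\|_{L^2(0,T)}$ is bounded, which together with hypothesis (ii) forces $u$ to be $2$-absolutely continuous on $(0,T)$, so $|u'|_d$ exists a.e.\ and $g$ may legitimately be used as a strong upper gradient along $u$.

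Next I would take $\liminf_{\epsilon\to0}$ in the displayed identity. On the left, $\liminf_\epsilon[\F_\epsilon(u_\epsilon(0)) - \F_\epsilon(u_\epsilon(s))] = \F(u(0)) - \limsup_\epsilon \F_\epsilon(u_\epsilon(s)) \leq \F(u(0)) - \F(u(s))$ by the $\Gamma$-liminf hypothesis (i). On the right, hypothesis (ii) gives $\liminf_\epsilon \int_0^s |u_\epsilon'|_{d_\epsilon}^2 \geq \int_0^s |u'|_d^2$, while Fatou's lemma applied in $t$ together with hypothesis (iii) gives $\liminf_\epsilon \int_0^s g_\epsilon^2(u_\epsilon) \geq \int_0^s g^2(u)$. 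Since the two sides of the identity coincide for every $\epsilon$, so do their $\liminf$s, and we obtain the energy inequality
\[
\F(u(0)) - \F(u(s)) \geq \frac12 \int_0^s |u'|_d^2(t)\,dt + \frac12 \int_0^s g^2(u(t))\,dt \qquad \text{for all } 0 \leq s < T.
\]
The reverse inequality is automatic: because $g$ is a strong upper gradient for $\F$ and $u$ is $2$-absolutely continuous, (\ref{stronguppergradienteqn}) followed by Young's inequality gives $\F(u(0)) - \F(u(s)) \leq \int_0^s g(u)|u'|_d \leq \frac12\int_0^s g^2(u) + \frac12\int_0^s |u'|_d^2$. Hence every inequality is an equality, $\F\circ u$ is nonincreasing and absolutely continuous, and differentiating the resulting energy identity yields (\ref{curvemaxslopeeqn}) for $\F$ and $g$; thus $u$ is a curve of maximal slope for $\F$ with respect to $g$.

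It remains to extract the quantitative convergences. Feeding the energy identity just proved for $u$ back into the one for $u_\epsilon$, $\limsup_\epsilon \F_\epsilon(u_\epsilon(s)) = \F(u(0)) - \liminf_\epsilon \frac12\int_0^s(|u_\epsilon'|_{d_\epsilon}^2 + g_\epsilon^2(u_\epsilon)) \leq \F(u(0)) - \bigl(\frac12\int_0^s |u'|_d^2 + \frac12\int_0^s g^2(u)\bigr) = \F(u(s))$; together with $\liminf_\epsilon \F_\epsilon(u_\epsilon(s)) \geq \F(u(s))$ from hypothesis (i), this gives $\F_\epsilon(u_\epsilon(t)) \to \F(u(t))$ for every $t \in [0,T)$. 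Consequently $\frac12\int_0^s(|u_\epsilon'|_{d_\epsilon}^2 + g_\epsilon^2(u_\epsilon)) \to \frac12\int_0^s(|u'|_d^2 + g^2(u))$ for every $s$; since each summand already has the sharp $\liminf$ lower bound by (ii) and (iii), the elementary fact that $a_\epsilon+b_\epsilon\to a+b$ with $\liminf a_\epsilon \geq a$, $\liminf b_\epsilon \geq b$ forces $a_\epsilon\to a$ and $b_\epsilon\to b$ yields $\int_0^s |u_\epsilon'|_{d_\epsilon}^2 \to \int_0^s |u'|_d^2$ and $\int_0^s g_\epsilon^2(u_\epsilon) \to \int_0^s g^2(u)$ for all $s$. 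Finally, convergence of these $L^2$ norms plus weak $L^2$-compactness (extract a weakly convergent subsequence of $|u_\epsilon'|_{d_\epsilon}$; the pointwise $\liminf$ bound forces the weak limit to dominate $|u'|_d$, hence to equal it in view of the norm identity) upgrades these to strong convergence $|u_\epsilon'|_{d_\epsilon} \to |u'|_d$ and $g_\epsilon(u_\epsilon) \to g(u)$ in $L^2_\loc(0,T)$.

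I expect the main obstacle to lie in the two ``soft'' points where the metric setting offers no linear shortcuts: first, rigorously establishing that $u$ is genuinely a $2$-absolutely continuous curve, since only then is $|u'|_d$ defined and only then can the strong-upper-gradient chain rule for $g$ be invoked — precisely the ingredient that lets the limiting energy identity be read as (\ref{curvemaxslopeeqn}); and second, passing from convergence of the $L^2$-norms of the metric derivatives and slopes to their $L^2_\loc$-convergence, which has to go through the Hilbert-space uniform-convexity argument rather than anything intrinsic to the metric structure.
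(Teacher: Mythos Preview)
The paper does not prove this theorem: it is quoted from Serfaty \cite{Serfaty} as a black box and then \emph{applied} in Theorem~\ref{convergenceofgradflow}. So there is no ``paper's own proof'' to compare against. Your argument is precisely the Sandier--Serfaty scheme that underlies the cited result, and the overall strategy --- pass to the limit in the energy dissipation equality, use the strong-upper-gradient chain rule on the limit to close the inequalities, then back-substitute to get the quantitative convergences --- is correct.

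One point is stated imprecisely. In the last paragraph you invoke a ``pointwise $\liminf$ bound'' to show the weak $L^2$ limit of $|u_\epsilon'|_{d_\epsilon}$ dominates $|u'|_d$. Hypothesis (iii) is indeed pointwise in $u$, so for the slopes this is fine; but hypothesis (ii) is only an \emph{integral} bound, and there is no pointwise statement $\liminf_\epsilon |u_\epsilon'|(t) \geq |u'|(t)$ available. The correct route for the metric derivatives (and this is what the paper actually does when verifying (ii) in the proof of Theorem~\ref{convergenceofgradflow}) is: pass to a weak $L^2$ limit $v$ of $|u_\epsilon'|$, use $d_W(u_\epsilon(s_0),u_\epsilon(s_1)) \leq \int_{s_0}^{s_1} |u_\epsilon'|$ together with lower semicontinuity of the distance to get $d(u(s_0),u(s_1)) \leq \int_{s_0}^{s_1} v$, and then invoke the minimality of the metric derivative \cite[Theorem 1.1.2]{AGS} to conclude $|u'|_d \leq v$ a.e. From there your norm-convergence-plus-weak-convergence argument goes through.
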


\begin{remark}\label{rem:choice_of_top}
{\rm
Although the metric $d$ induces a natural topology on $\S$, the above result admits a notion of convergence $S$ that can be induced by a weaker topology $\sigma$ on $\S$. Indeed, $u_\eps \stackrel{S}{\rightharpoonup} u$ means that $\pi_\eps(u_\eps)\stackrel{\sigma}{\rightharpoonup}u$ for some map $\pi_\eps\colon \S_\eps \to \S$ (see \cite{Serfaty} and \cite[Remark 2.0.5]{AGS} for details).
}
\end{remark}

\medskip

\subsection{$\Gamma$-convergence of the curves of maximal slope for the regularized energies}
Serfaty's result on the $\Gamma$-convergence of curves of maximal slope provides a powerful general framework. In practice, it can be  challenging to verify conditions $(i)-(iii)$. However, for the regularized interaction energies $\E_\eps$, these conditions follow from our previous results and the following HWI-type inequality, from work by the first author, which allows us to compute the strong upper gradients.

\medskip

\begin{lemma}[{c.f. \cite{CraigOsgood}}] \label{stronggradientlemma}
Suppose $\mu,\nu \in \P_{2,R}(\Rd)$ and $\E$ is an interaction energy satisfying (E1)--(E2). 
For $\omega(x)$ as in equation (\ref{omegadef}), there exists $C_{R,d,p,q}>0$ so that
\[ |\E(\mu) - \E(\nu)| \leq 2\| \nabla K*\mu\|_{L^2(\mu)} d_W(\mu,\nu) +C_{R,d,p,q}\, \omega \big(d^2_W(\mu,\nu) \big).\]
\end{lemma}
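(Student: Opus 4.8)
The plan is to deduce this HWI-type inequality from the general theory of $\omega$-convex functionals developed in \cite{CraigOsgood}, applied to the restriction of $\E$ to $\P_{2,R}(\Rd)$. The first step is to establish that $\E$ is $\omega$-convex on $\P_{2,R}(\Rd)$ with $\omega$ as in \eqref{omegadef}. This reduces, via the standard calculus of geodesic convexity for interaction energies along displacement interpolants (cf. \cite{McCann2,5person}), to a pointwise estimate on the second derivative of $K(x + t v)$ along straight lines, where the key obstruction is the repulsive singularity $-|x|^p/p$ at the origin. Because $\mu,\nu$ have density bounded by $R$, the transported mass cannot concentrate too strongly, and a computation of $\iint D^2 K^r(x-y)\colon (T(x)-T(y))\otimes(T(x)-T(y))\,d\mu\,d\nu$ type — combined with the fact that $|x|^p$ has a log-Lipschitz modulus of convexity against $L^\infty$ densities — yields precisely the modulus $\omega$. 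This is where I expect to lean most heavily on \cite{CraigOsgood}; the attractive part $K^a = |x|^q/q$ with $0 < q \le 2$ is genuinely convex (or at worst contributes a bounded, benign term) and causes no difficulty.

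\textbf{From $\omega$-convexity to the HWI inequality.} Once $\omega$-convexity is in hand, the second step is the HWI interpolation argument itself. Fix a constant-speed geodesic $\mu_\alpha$ from $\mu_0 = \mu$ to $\mu_1 = \nu$. Writing the $\omega$-convexity inequality, rearranging, dividing by $\alpha$, and letting $\alpha \to 0^+$ produces
\[
\E(\nu) - \E(\mu) \geq \liminf_{\alpha \to 0^+} \frac{\E(\mu_\alpha) - \E(\mu)}{\alpha} - \frac{C}{2}\,\omega\big(d_W^2(\mu,\nu)\big),
\]
where in the limit we used $\omega(\alpha^2 d_W^2)/\alpha \to 0$ and $\omega((1-\alpha)^2 d_W^2) \to \omega(d_W^2)$, together with monotonicity of $\omega$. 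The remaining task is to identify the directional-derivative term: by the first-variation formula for the interaction energy (its metric slope being $2\|\nabla K * \mu\|_{L^2(\mu)}$, cf. Remark \ref{rem:differentiability} and \cite[Proposition 2.2]{5person}), one has
\[
\liminf_{\alpha \to 0^+} \frac{\E(\mu_\alpha) - \E(\mu)}{\alpha} \geq -2\|\nabla K * \mu\|_{L^2(\mu)}\, d_W(\mu,\nu),
\]
the minus sign and the Cauchy--Schwarz bound coming from pairing the velocity of the geodesic (which has $L^2(\mu)$-norm equal to $d_W(\mu,\nu)$) against $\nabla K * \mu$. Combining the two displays gives $\E(\nu) - \E(\mu) \geq -2\|\nabla K*\mu\|_{L^2(\mu)}\,d_W(\mu,\nu) - \tfrac{C}{2}\omega(d_W^2(\mu,\nu))$; swapping the roles of $\mu$ and $\nu$ and using that $\omega$ is symmetric in its argument's dependence on $d_W$, then taking the larger of the two bounds, yields the two-sided estimate with $C_{R,d,p,q} = C/2$ (absorbing constants). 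One subtlety to handle carefully is that the geodesic $\mu_\alpha$ need not remain in $\P_{2,R}(\Rd)$ — but density bounds are preserved along displacement interpolation between bounded densities (the interpolant's density is controlled by those of the endpoints), so this is not an essential obstacle.

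\textbf{Main obstacle.} The genuinely technical step is the first one: proving the $\omega$-convexity of $\E$ on $\P_{2,R}(\Rd)$ with the specific modulus \eqref{omegadef}, i.e., showing that the Newtonian/Riesz-type repulsive kernel, which is \emph{not} $\lambda$-convex for any $\lambda$, nevertheless acquires a log-Lipschitz modulus of convexity once we constrain the competing measure to have density at most $R$. The quantitative form of $\omega$ — the crossover at $e^{-1-\sqrt 2}$ and the constants $1+\sqrt2$ — is dictated by optimizing a bound of the form $t\log(R/t)$ over the relevant range, and matching it precisely requires care. Fortunately this is exactly the content of \cite{CarrilloLisiniMainini} and \cite{CraigOsgood}, which we may invoke; the work here is to verify that our kernels $K$ satisfying (E1)--(E2) fall within their hypotheses (the repulsive part is superharmonic with a power-law singularity no worse than Newtonian, and the attractive part is a bounded perturbation of a convex function), after which the stated inequality follows.
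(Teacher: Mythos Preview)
The paper does not prove this lemma in-text; it is quoted from \cite{CraigOsgood} and used as a black box. So your sketch is being measured against the argument in that reference rather than anything written here.

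Your outline is in the right spirit, but there is a genuine gap in the second step. From $\omega$-convexity of $\E$ along the geodesic from $\mu$ to $\nu$, dividing by $\alpha$ and sending $\alpha\to 0^+$ does give
\[
\E(\mu)-\E(\nu)\ \leq\ 2\|\nabla K*\mu\|_{L^2(\mu)}\,d_W(\mu,\nu)+\tfrac{C}{2}\,\omega\big(d_W^2(\mu,\nu)\big).
\]
However, ``swapping the roles of $\mu$ and $\nu$'' produces
\[
\E(\nu)-\E(\mu)\ \leq\ 2\|\nabla K*\nu\|_{L^2(\nu)}\,d_W(\mu,\nu)+\tfrac{C}{2}\,\omega\big(d_W^2(\mu,\nu)\big),
\]
with the slope evaluated at $\nu$, not at $\mu$. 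These two one-sided bounds do \emph{not} combine into the stated two-sided inequality with $\|\nabla K*\mu\|_{L^2(\mu)}$ alone on the right; ``taking the larger of the two bounds'' would replace the right-hand side by $\max\{\|\nabla K*\mu\|_{L^2(\mu)},\|\nabla K*\nu\|_{L^2(\nu)}\}$, which is not what the lemma asserts. This matters downstream: in the proof of Theorem~\ref{convergenceofgradflow} one needs precisely the two-sided estimate with the slope pinned at a single point in order to verify that $g(\nu)=2\|\nabla K*\nu\|_{L^2(\nu)}$ is a strong upper gradient.

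The way this is actually handled in \cite{CraigOsgood} is to bypass the abstract $\omega$-convexity $\Rightarrow$ one-sided HWI route and expand the energy difference directly along the optimal map $T$ pushing $\mu$ to $\nu$:
\[
\E(\nu)-\E(\mu)=2\int_{\Rd}\nabla K*\mu(x)\cdot\big(T(x)-x\big)\,d\mu(x)\ +\ \mathcal{R},
\]
and then prove $|\mathcal{R}|\leq C_{R,d,p,q}\,\omega\big(d_W^2(\mu,\nu)\big)$ using the $L^\infty$ bound on the densities together with the log-Lipschitz regularity of $\nabla K*\mu$ (this is where the specific modulus \eqref{omegadef} enters). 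Because the remainder is controlled in absolute value, a \emph{single} expansion yields both directions of the inequality with the gradient fixed at $\mu$. Your identification of the ``main obstacle'' (the log-Lipschitz estimate for the singular repulsive part against bounded densities) is exactly right; it is the mechanism for assembling the two-sided bound from it that needs to be corrected.
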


With this, we show that the curves of maximal slope for the regularized energies $\Gamma$-converge to the curve of maximal slope of the unregularized energy.

\medskip

\begin{theorem} \label{convergenceofgradflow}
Suppose the energy $\E$ satisfies  (E1)--(E2) and the mollifier $\varphi$ satisfies (M1)--(M3).
Let $\mu_\epsilon(t)\colon(0,T) \to \P_{2,R}(\Rd)$ be a curve of maximal slope of $\E_\eps$ for the strong upper gradient $g_\epsilon(\mu):=2 \|\nabla K_\epsilon * \mu\|_{L^2(\mu)}$ on the metric space $(P_{2,R}(\Rd),d_W)$. Suppose  that $\mu_\epsilon(0)$ is well-prepared, in the sense that for some $\mu(0)\in\P_{2,R}(\Rd)$,
	\[
		\mu_\eps(0)\arrow\mu(0) \text{ weak-* in $\P(\Rd)$ and } \lim_{\epsilon\to 0} \E_\epsilon(\mu_\epsilon(0)) = \E(\mu(0)).
	\]
	
Then for all $t \in (0,T)$, $\mu_\eps(t)$ has a weak-* convergent subsequence $\mu_\epsilon(t) \xrightarrow{\epsilon \to 0} \mu(t)$, and $\mu(t) \colon (0,T) \to \P_{2,R}(\Rd)$ is a curve of maximal slope of $\E$  for the strong upper gradient $g(\mu):=2 \|\nabla K * \mu\|_{L^2(\mu)}$. Furthermore, as $\eps\arrow 0$,
	\begin{gather*}
 \E_\epsilon(\mu_\epsilon(t)) \to \E(\mu(t)) \text{ for all }  \ t \in [0, T),\\
2\|\nabla K_\epsilon * \mu_\epsilon \|_{L^2(\mu_\epsilon)} \to 2\| \nabla K* \mu\|_{L^2(\mu)} \text{ in } L^2_\loc(0,T),\\ \text{and } \ |\mu'_\epsilon|_{d_W} \to |\mu'|_{d_W} \text{ in } L^2_\loc(0,T).
\end{gather*}

\end{theorem}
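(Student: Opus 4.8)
The plan is to invoke Serfaty's abstract scheme for the $\Gamma$-convergence of gradient flows, Theorem \ref{thm:Serfatygammagradflow}, applied with $\S_\eps = \S = \P_{2,R}(\Rd)$, $d_\eps = d = d_W$, $\F_\eps = \E_\eps$, $\F = \E$, and the strong upper gradients $g_\eps(\nu) = 2\|\nabla K_\eps * \nu\|_{L^2(\nu)}$, $g(\nu) = 2\|\nabla K*\nu\|_{L^2(\nu)}$, taking as the notion of convergence $S$ the weak-$*$ convergence of measures in $\P(\Rd)$ (admissible by Remark \ref{rem:choice_of_top}, with $\pi_\eps$ the identity). Before the three convergence conditions, one must check that $g_\eps$ and $g$ really are strong upper gradients. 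For $\E_\eps$ this follows from its $\lambda_\eps$-convexity on $\P_2(\Rd)$ (Remark \ref{rem:lambdaconv}) and the identification of its metric slope (Remark \ref{rem:differentiability}): applying the HWI-type inequality along a $2$-absolutely continuous curve contained in $\P_{2,R}(\Rd) \subset \P_2(\Rd)$, dividing by the parameter increment, and letting it tend to zero kills the $\lambda_\eps$-term (since $d_W^2$ is quadratic in the increment) and yields \eqref{stronguppergradienteqn}. For $\E$, which is only $\omega$-convex on $\P_{2,R}(\Rd)$, the same conclusion follows from Lemma \ref{stronggradientlemma}: the correction $\omega(d_W^2(\mu(s),\mu(t)))$ is negligible after division by $|t-s|$ because $\omega(r)/\sqrt{r} \to 0$ as $r \to 0$ — this is exactly what the restriction to bounded densities buys us.

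Given curves of maximal slope $\mu_\eps$ of $\E_\eps$ with well-prepared initial data, I would first produce the limit curve. Integrating the curve-of-maximal-slope inequality \eqref{curvemaxslopeeqn} and using $\E_\eps \geq 0$ (as $K_\eps \geq 0$) gives $\int_0^T |\mu_\eps'|_{d_W}^2(t)\,dt \leq 2\E_\eps(\mu_\eps(0))$, which is uniformly bounded by hypothesis; hence $d_W(\mu_\eps(t),\mu_\eps(s)) \leq C|t-s|^{1/2}$ uniformly in $\eps$. Since $\{\mu_\eps(0)\}$ is tight (it converges narrowly to the probability measure $\mu(0)$), a Chebyshev estimate combined with this uniform length bound shows $\{\mu_\eps(t)\}_\eps$ is tight uniformly in $t \in [0,T)$. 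The refined Arzel\`a--Ascoli theorem \cite[Proposition 3.3.1]{AGS} then produces a subsequence along which $\mu_\eps(t) \to \mu(t)$ weak-$*$ for every $t$, the limit being $\tfrac12$-H\"older in $d_W$; lower semicontinuity of $d_W$ under weak-$*$ convergence, together with $\mu(0) \in \P_{2,R}(\Rd)$ and $\|\mu(t)\|_\infty \leq \liminf_\eps \|\mu_\eps(t)\|_\infty \leq R$, give $\mu(t) \in \P_{2,R}(\Rd)$ for all $t$ and $\mu(t) \to \mu(0)$ as $t \to 0$, so that Serfaty's initial-data hypotheses are met.

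It remains to verify Serfaty's conditions (1)--(3). Condition (1), the $\Gamma$-liminf inequality, follows as in Step 1 of Theorem \ref{thm:gammaconv}: by Lemma \ref{moveepsilontomu}, $\E_\eps(\mu_\eps) = \E(\mu_\eps * \varphi_\eps)$, and $\mu_\eps * \varphi_\eps \to \mu$ weak-$*$ whenever $\mu_\eps \to \mu$ weak-$*$ (using tightness and $\varphi_\eps \to \delta_0$), so the Portmanteau argument — recall $K$ is lower semicontinuous and nonnegative — yields $\liminf_\eps \E_\eps(\mu_\eps) \geq \E(\mu)$; cf. Remark \ref{rem:lsc_weak}. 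Condition (2) is lower semicontinuity of the $2$-action: writing $\int_0^s |\mu_\eps'|_{d_W}^2 = \sup \sum_i d_W^2(\mu_\eps(t_i),\mu_\eps(t_{i-1}))/(t_i - t_{i-1})$ over partitions of $[0,s]$ and using lower semicontinuity of $d_W$ under pointwise weak-$*$ convergence, one passes to the $\liminf$ in each summand and then supremizes. Condition (3), the lower bound on slopes, is the crux. Writing $\nabla K_\eps * \mu_\eps = \nabla K * \tilde\mu_\eps$ with $\tilde\mu_\eps := \varphi_\eps * \varphi_\eps * \mu_\eps \to \mu$ weak-$*$ and $\|\tilde\mu_\eps\|_\infty \leq R$, I split $\nabla K = \nabla K^s + \nabla K^n$ by a cutoff near the origin, so that $\nabla K^s \in L^1(\Rd)$ is compactly supported (here $2-d \leq p$ enters) and $\nabla K^n$ is continuous with at most linear growth. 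Convolution with a weak-$*$ convergent sequence of uniformly bounded densities sends $\nabla K^s$ to an $L^2_\loc(dx)$-convergent sequence (approximate $\nabla K^s$ in $L^1$ by $C_c$ functions), and $\nabla K^n * \tilde\mu_\eps \to \nabla K^n * \mu$ in $L^2_\loc(dx)$ follows from locally uniform convergence together with a uniform-in-$\eps$ bound on the mass of $\tilde\mu_\eps$ near infinity — furnished, on the sublevel set $\{\E_\eps \leq C_0\}$ where the curves live, by the confinement of the kernel exactly as in Proposition \ref{prop:compactness}. Thus $\nabla K_\eps * \mu_\eps \to \nabla K * \mu$ in $L^2_\loc(dx)$; since $\|\mu_\eps\|_\infty, \|\mu\|_\infty \leq R$ and $\mu_\eps \to \mu$ weak-$*$, for every ball $B$ one has $\int_B |\nabla K_\eps * \mu_\eps|^2\, d\mu_\eps \to \int_B |\nabla K * \mu|^2\, d\mu$, whence $\liminf_\eps \int_{\Rd} |\nabla K_\eps * \mu_\eps|^2\, d\mu_\eps \geq \int_B |\nabla K * \mu|^2\, d\mu$; supremizing over $B$ gives condition (3).

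With all the hypotheses of Theorem \ref{thm:Serfatygammagradflow} in place, its conclusion yields that $\mu(t)$ is a curve of maximal slope for $\E$ relative to $g$, together with the asserted convergence of energies, slopes, and metric derivatives, completing the proof. The main obstacle is condition (3), and its two difficulties mirror the two scales of the interaction kernel: near the origin $\nabla K$ is too singular to pass to the limit along weak-$*$ convergent measures without the bounded-density constraint of $\P_{2,R}(\Rd)$, while near infinity $\nabla K$ grows, so a uniform-in-$\eps$ confinement estimate for the flow is needed to stop mass from escaping and spoiling the $L^2_\loc$ convergence. Making these two facts — together with the uniform tightness underlying the preliminary Arzel\`a--Ascoli step — precise is where the bulk of the technical work lies.
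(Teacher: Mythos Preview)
Your approach is correct and follows the same overall framework as the paper: apply Serfaty's scheme (Theorem~\ref{thm:Serfatygammagradflow}) on $(\P_{2,R}(\Rd),d_W)$ with weak-$*$ convergence as the notion $S$, after verifying that $g_\eps$ and $g$ are strong upper gradients and extracting a limit curve by a compactness argument. Your treatment of the strong upper gradients via the HWI and $\omega$-HWI inequalities, of the compactness step (where you invoke the refined Arzel\`a--Ascoli theorem rather than the paper's bare $d_W$-boundedness argument; this is in fact cleaner, as it produces a single subsequence working for all $t$), and of condition~(ii) (via the partition formula rather than the paper's weak-$L^2$ compactness of $|\mu_\eps'|$) are all sound.

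The substantive difference is in condition~(iii). The paper does not aim for strong $L^2_\loc(dx)$ convergence of $\nabla K_\eps*\mu_\eps$; instead it invokes \cite[Theorem 5.4.4]{AGS}, which reduces the liminf inequality for slopes to showing
\[
\int_{\Rd} f(x)\,(\nabla K_\eps*\mu_\eps)(x)\,d\mu_\eps(x) \longrightarrow \int_{\Rd} f(x)\,(\nabla K*\mu)(x)\,d\mu(x)
\quad\text{for all } f\in C^\infty_c(\Rd),
\]
a purely weak statement. After writing $\nabla K_\eps*\mu_\eps = \nabla K*\tilde\mu_\eps$, this is split into $A_\eps+B_\eps$ and handled by dominated convergence and a uniform-integrability argument. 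Your direct route (strong $L^2_\loc$ convergence of $\nabla K*\tilde\mu_\eps$, combined with weak-$*$ $L^\infty$ convergence of $\mu_\eps$) also works but asks for more.

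One point in your sketch needs sharpening. For the far-field piece $\nabla K^n*\tilde\mu_\eps$ in the regime $1<q\leq 2$, tightness of $\{\tilde\mu_\eps\}$ --- which is what Proposition~\ref{prop:compactness} delivers --- is not enough: since $|\nabla K^n(x-y)|$ grows like $|y|^{q-1}$, one must control $\int_{|y|>M}|y|^{q-1}\,d\tilde\mu_\eps$ uniformly in $\eps$, not merely $\tilde\mu_\eps(\{|y|>M\})$. The paper supplies this from the energy decrease along the flow: $\E(\tilde\mu_\eps(t))=\E_\eps(\mu_\eps(t))\leq \E_\eps(\mu_\eps(0))\leq C$, and the attractive term of $K$ then controls $\int |y|^q\,d\tilde\mu_\eps$, whence $\int_{|y|\geq k}|y|^{q-1}\,d\tilde\mu_\eps\leq k^{-1}\int|y|^q\,d\tilde\mu_\eps$ gives the required uniform integrability. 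With that amendment your argument is complete.
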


\begin{remark}
{\rm
For any $\mu(0)\in\P_{2,R}(\Rd)$, there exists $\mu_\eps(0)\in\P_{2,R}$ satisfying the conditions of the theorem: simply define $\mu_\eps(0)$ by convolving $\mu(0)$ with the heat kernel, as in Theorem \ref{thm:gammaconv}(ii).
}
\end{remark}

\medskip

\begin{proof}
Let $(\S_\epsilon,d_\epsilon) =(\S, d)= (\P_{2,R}(\Rd),d_W)$. Note that 
\[ \P_{2, R}(\Rd) = \{ \mu\in\P_2(\Rd) \colon \|\mu\|_{L^\infty(\Rd)} \leq R\} \]
 is closed with respect to $d_W$, thus $\P_{2,R}(\Rd)$ is a complete metric space. 
Furthermore, any $d_W$ bounded set of $\P_{2, R}(\Rd)$ is relatively compact with respect to weak-* convergence in $\P(\Rd)$, and its limit points lie in $\P_{2, R}(\Rd)$.
Given $\mu_\epsilon \in \S_\epsilon$, $\mu \in \S$ we say $\mu_\epsilon \stackrel{S}{\rightharpoonup} \mu $ if the $\mu_\eps$ converges with respect to the weak-* convergence in $\P(\Rd)$. 

We now define the strong upper gradients.
 For all $\epsilon > 0$, Remark (\ref{rem:differentiability})  ensures that
$g_\epsilon(\nu) = 2\| \nabla K_\epsilon * \nu \|_{L^2(\nu)}$ is a strong upper gradient of $\E_\epsilon$ on $\P_2(\Rd)$, hence on $\P_{2,R}(\Rd)$.
Next, we show that $g(\nu) =2 \|\nabla K * \nu\|_{L^2(\nu)}$ is a strong upper gradient of $\E$. Throughout, we use the fact that if $\mu \in L^\infty(\Rd) \cap L^1(\Rd)$, then $ K * \mu(x)$ is continuously differentiable and  $\nabla (K*\mu) = (\nabla K)* \mu$.

Suppose $\nu(t): (a,b) \to \P_{2,R}(\Rd)$ is an 2-absolutely continuous curve. The function $t \mapsto g(\nu(t))$ is measurable, since it is given by the composition of measurable functions. By Lemma \ref{stronggradientlemma},
\beqn \label{Econtinuity}
|\E(\nu(t)) - \E(\nu(s))| \leq 2 \| \nabla K*\nu(t) \|_{L^2(\nu(t))} d_W(\nu(t),\nu(s))+C_{R,d,p,q}\, \omega \big(d^2_W(\nu(t),\nu(s)) \big).
\eeqn
As in Remark \ref{rem:differentiability}, we estimate $\| \nabla K*\nu(t) \|_{L^2(\nu(t))}$ by breaking $\nabla K$ into its singular and nonsingular parts,
\begin{align*} 
&\| \nabla K*\nu(t) \|_{L^2(\nu(t))}  \leq \| \nabla K^n*\nu(t) \|_{L^2(\nu(t))} + \| \nabla K^s*\nu(t) \|_{L^2(\nu(t))} \\
&\quad \leq  \left( \int_{\Rd} \left| \int_{\Rd} \nabla K^n(x-y) \, d\nu(y,t) \right|^2 \, d\nu(x,t) \right)^{1/2}  + \sqrt{R} \| \nabla K^s*\nu(t) \|_{L^2(\Rd)} \\
&\quad \leq \int_{\Rd}  \left( \int_{\Rd} |\nabla K^n(x-y)|^2 \, d\nu(x,t) \right)^{1/2} \, d\nu(y,t)  + \sqrt{R} \| \nabla K^s \|_{L^1(\Rd)} \|\nu(t) \|_{L^2(\Rd)}.
\end{align*}
Since $\nabla K^n$ has at most linear growth, the first term is bounded by the second moments of $\nu(t)$, which are uniformly bounded for $t \in (a,b)$ since $\{\nu(t)\}_{t \in (a,b)}$ is bounded with respect to $d_W$. The second term is uniformly bounded since $\|\nu(t)\|_{L^1}= 1$ and $\|\nu(t)\|_{L^\infty} \leq R$.

By the absolute continuity of $\nu(t)$, $d_W(\nu(t),\nu(s)) \leq \int_s^t |\nu'|(r)\,dr$ for $|\nu'|(r) \in L^2(a,b)$. Combining this with (\ref{Econtinuity}) ensures that for almost every $t$
\begin{align*}
 \left| \frac{d}{dt} \E(\nu(t)) \right| &\leq \lim_{s \to t} \frac{|\E(\nu(t)) - \E(\nu(s))|}{|t-s|} \\
 &\leq \lim_{s \to t} 2 \| \nabla K*\nu(t) \|_{L^2(\nu(t))} \frac{d_W(\nu(t),\nu(s))}{|t-s|} +C_{R,d,p,q}\, \frac{\omega \big(d^2_W(\nu(t),\nu(s)) \big)}{|t-s|} \\
 &\leq 2 \| \nabla K * \nu(t) \|_{L^2(\nu(t))} |\nu'|(t) , 
 \end{align*}
 where in the last line we use that, for $\omega(x)$ defined by \eqref{omegadef}, $\omega(x^2) \leq 4x \omega(x)$ and $\lim_{x\to 0} \omega(x) =0$.
Integrating the above inequality, we conclude,
\[ |\E(\nu(t)) - \E(\nu(s))| =  \left| \int_s^t \frac{d}{dr} \E(\nu(r))\,dr \right|  \leq  \int_s^t 2 \| \nabla K * \nu(r) \|_{L^2(\nu(r))} |\nu'|(r)\,dr . \]
Thus $g(\nu) = 2\|\nabla K * \nu\|_{L^2(\nu)}$ is a strong upper gradient of $\E$.

Now we show that for all $t \in (0,T)$,  $\mu_\epsilon(t) \xrightarrow{\epsilon \to 0} \mu(t) \in \P_{2,R}(\Rd)$, with respect to weak-* convergence of probability measures. Since $\mu_\eps(t)\colon [0,\infty) \to \P_2(\Rd)$ is a curve of maximal slope for the strong upper gradient $g_\eps(\mu)=2\|\nabla K_\eps * \mu\|_{L^2(\mu)}$, applying Cauchy's inequality to (\ref{stronguppergradienteqn}) and comparing with 
(\ref{curvemaxslopeeqn}), we obtain (see \cite[Remark 1.3.3]{AGS})
\begin{align} \label{eqn:energy_identity}
\E_\eps(\mu_\eps(s)) - \E_\eps(\mu_\eps(t)) = \int_s^t |\mu_\eps'|^2(r) \, dr
\end{align}
for all $0\leq s \leq t <\infty$.
By definition of the metric local slope and the fact that $\E_\eps(\mu_\eps(t))\geq 0$,
	\beqn
			d_W^2(\mu_\eps(0),\mu_\eps(t)) \leq \left(\int_0^t |\mu_\eps^\pr|(r)\,dr\right)^2 \leq t \int_0^t |\mu_\eps^\pr|^2(t)\,dt \leq T\, \E_\eps(\mu_\eps(0))
		\nonumber
	\eeqn
for all $t \in (0,T)$. Since $\lim_{\epsilon\to 0} \E_\epsilon(\mu_\epsilon(0)) = \E(\mu(0))$, the right hand side is uniformly bounded for $\eps$ sufficiently small. Therefore, $\{\mu_\eps(t)\}_{\eps>0}$ is uniformly bounded in $\P_{2,R}(\Rd)$, and, up to a subsequence, $\mu_\eps(t)\xrightarrow{\eps \to 0}\mu(t) \in \P_{2,R}(\Rd)$ with respect to weak-*convergence in $\P(\Rd)$.

It remains to verify criteria $(i)-(iii)$ of Theorem (\ref{thm:Serfatygammagradflow}) to conclude that $\mu(t)$ is a curve of maximal slope of $\E$ for $g(\mu)$, and the corresponding energies, strong upper gradients and metric local slopes converge as $\eps\to 0$.

Criterion $(i)$ follows immediately from Theorem \ref{thm:gammaconv}, part $(i)$ and Remark \ref{rem:lsc_weak}.
To prove $(ii)$, we assume without loss of generality that there exists $0\leq C<+\infty$ so that 
\[ C = \liminf_{\epsilon \to 0} \int_0^s |\mu_\epsilon'|^2(t)\, dt. \]
Choose a subsequence $|\tilde{\mu}_\epsilon'|(t)$ so that $\lim_{\epsilon \to 0} \int_0^s |\tilde{\mu}_\epsilon'|^2(t)\, dt = C$. Then $|\tilde{\mu}_\epsilon'|(t)$ is bounded in $L^2(0,s)$ so, up to a further subsequence, it is weakly convergent to some $v(t) \in L^2(0,s)$. Consequently,  for any $0 \leq s_0 \leq s_1 \leq s$,
\[ \lim_{\epsilon \to 0} \int_{s_0}^{s_1} |\tilde{\mu}_\epsilon|(t) \, dt= \int_{s_0}^{s_1} v(t)\, dt. \]
By taking limits in the definition of the metric derivative and using the lower semicontinuity of $d_W$ with respect to weak-* convergence,
\[ d_W(\mu_\epsilon(s_0), \mu_\epsilon(s_1)) \leq \int_{s_0}^{s_1} |\mu_\epsilon'|(t)\, dt \quad \text{ yields } \quad d_W(\mu(s_0),\mu(s_1)) \leq \int_{s_0}^{s_1} v(t)\, dt. \]
By \cite[Theorem 1.1.2]{AGS}, this implies that $|\mu'|(t) \leq v(t)$ for a.e. $t \in (0,s)$. Thus, by the lower semicontinuity of the $L^2(0,s)$ norm with respect to weak convergence,
\[ \liminf_{\epsilon \to 0} \int_0^s |\mu_\epsilon'|^2(t)\, dt = \lim_{\epsilon \to 0}  \int_0^s |\tilde{\mu}_\epsilon'|^2(t)\, dt \geq \int_0^s v(t)^2\, dt \geq \int_0^s |\mu'|^2(t)\, dt. \]

Finally, we turn to $(iii)$. We assume without loss of generality that 
\[ C =   \liminf_{\epsilon \to 0} g_\epsilon(\mu_\epsilon), \]
for some $0\leq C<+\infty$.
Choose a subsequence $g_\epsilon(\mu_\epsilon)$ so that $ \lim_{\epsilon \to 0} g_\epsilon(\mu_\epsilon)  = C$. Since $\|\mu_\epsilon(t)\|_{L^\infty(\Rd)} \leq R$ for all $\epsilon >0$ and all $t \in (0,T)$, there exists a further subsequence so that $\mu_\epsilon$ converges to some limit $\nu$ in the weak-* topology of $L^\infty(\Rd)$. Since $\mu_\epsilon$ also converges to $\mu$ with respect to the weak-* topology of $\P(\Rd)$, for all $f \in C^\infty_c(\Rd)$,
\[ \int_{\Rd} f(x) \nu(x)\, dx = \lim_{\epsilon \to 0} \int_{\Rd} f(x) \mu_\epsilon(x)\, dx = \int_{\Rd} f(x) \mu(x)\, dx; \]
hence, $\nu = \mu$.

By \cite[Theorem 5.4.4]{AGS} it suffices to show 
\[ \lim_{\epsilon \to 0} \int_{\Rd} f(x) (\nabla K_\epsilon * \mu_\epsilon)(x)\, d \mu_\epsilon(x)= \int_{\Rd} f(x)( \nabla K *\mu)(x)\, d \mu(x) \quad\text{ for all } f \in C^\infty_c(\Rd). \]
Then using the convexity and the lower semicontinuity of the function $|\cdot|^2$ along with the weak-* convergence of $\mu_\eps$ yields the result.

Following a similar argument as in Lemma \ref{moveepsilontomu} and defining $\tilde{\mu}_\epsilon  = \varphi_\epsilon * \mu_\epsilon * \varphi_\epsilon$, we have 
	\beqn
	 \begin{aligned}
			\left| \int_{\Rd} f(x)( \nabla K_\epsilon *\right. &\left. \mu_\epsilon)(x)\, d \mu_\epsilon(x)- \int_{\Rd} f(x)( \nabla K *\mu)(x)\, d \mu(x) \right| \\
																					 &=\left| \int_{\Rd} f(x)( \nabla K * \tilde{\mu}_\epsilon)(x)\, d \mu_\epsilon(x) - \int_{\Rd} f(x)( \nabla K *\mu)(x)\, d \mu(x) \right| \\
																					 &\leq \left|\int_{\Rd} f(x) [(\nabla K *\tilde{\mu}_\epsilon)(x) - (\nabla K*\mu)(x)]\, d \mu_\epsilon(x) \right|\\
		 																			 &\qquad+ \left| \int_{\Rd} f(x)(\nabla K * \mu)(x)\, d \mu_\epsilon(x) - \int_{\Rd} f(x)(\nabla K * \mu)(x)\,d\mu(x) \right|\\
		 																			 &=: A_\epsilon + B_\epsilon.
		\end{aligned}
		\nonumber
	\eeqn
Since  $\mu_\eps\arrow\mu$ weak-* in $\P(\Rd)$ and $f(x)(\nabla K*\mu)(x)$ is bounded and continuous, $\lim_{\epsilon \to 0} B_\epsilon = 0$.

It remains to show $\lim_{\eps \to 0} A_\epsilon= 0$. First, note that $\tilde{\mu}_\epsilon \arrow \mu$ in the weak-* topology of $L^\infty(\Rd)$ as $\eps\arrow 0$. Indeed, for any $f \in L^1(\Rd)$, we have that
		\begin{multline}
				\left| \int_{\Rd} f(x)\, d \tilde{\mu}_\epsilon(x)   - \int_{\Rd} f(x)\, d \mu(x) \right| 																					= \left| \int_{\Rd} \varphi_\epsilon* f * \varphi_\epsilon(x)\, d \mu_\epsilon(x)  - \int_{\Rd} f(x)\, d \mu(x) \right| \\
																					 \leq R\|\varphi_\epsilon * f* \varphi_\epsilon - f \|_{L^1} + \left| \int_{\Rd} f(x)\, d \mu_\epsilon(x) - \int_{\Rd} f(x)\, d \mu(x) \right|,
			 \nonumber
		\end{multline}
where both terms approach zero as $\epsilon \to 0$. Returning to $A_\eps$,
\beqn
		\begin{aligned}
A_\epsilon &\leq \int_{\Rd} \left|f(x) (\nabla K*( \tilde{\mu}_\epsilon -\mu))(x)\right|\, \left|\mu_\epsilon(x)\right|\, dx \\
					& \leq R\, \|f\|_{L^\infty(\Rd)} \int_{\supp f} |\nabla K * ( \tilde{\mu}_\epsilon -\mu)(x)|\, dx.
		\end{aligned}
		\nonumber
\eeqn
Since the integrand has at most linear growth, it is bounded on the compact set $\supp f$, and we may apply the dominated convergence theorem, provided the integrand converges pointwise. 

When $0<q\leq 1$, $\nabla K$ is the sum of a bounded continuous function and an integrable function, and since $\tilde{\mu}_\eps \xrightarrow{\eps \to 0} \mu$ in both weak-* probability and weak-* $L^\infty(\Rd)$, the integrand converges for each $x$. On the other hand, when $1<q\leq 2$, it suffices to show that $\nabla K$ is the sum of a continuous function, which is uniformly integrable with respect to $\tilde{\mu}_\eps$, and an integrable function \cite[Lemma 5.1.7]{AGS}. In particular, is enough to show that $|x|^{q-1}$ is uniformly integrable with respect to $\tilde{\mu}_\eps$. Since $|x|\geq k$ implies that $|x|/k\geq 1$ we have that
	\[
		\lim_{k\arrow \infty} \int_{|x|\geq k}|x|^{q-1}\,d\tilde{\mu}_\eps \leq \lim_{k\arrow\infty}\frac{1}{k}\int_{|x|\geq k}|x|^q\,d\tilde{\mu}_\eps \leq \lim_{k\arrow\infty}\frac{1}{k}\E(\tilde{\mu}_\eps) = \lim_{k\arrow\infty}\frac{1}{k}\E_\eps(\mu_\eps).
	\]
Since $\mu_\eps(t)$ is a curve of maximal slope for $\E_\eps$,  \eqref{eqn:energy_identity} gives $\E_\eps(\mu_\eps(t))\leq \E_\eps(\mu_\eps(0))$ for all $t \in (0,T)$. The well-preparedness of the initial data gives $\E_\eps(\mu_\eps(0))<C$, hence the result follows.
\end{proof}

\medskip

\begin{remark}\label{rem:gradflowgeneralization}
{\rm
The extension of Theorem~\ref{convergenceofgradflow} to energies defined via kernels satisfying the more general conditions (H1)--(H5) from section~\ref{sec:generalization} requires also a generalization of the technical result of Lemma~\ref{stronggradientlemma}, which plays a crucial role in the proof of the convergence theorem. As such a generalization is beyond the scope of our current study we leave this to future studies.
}
\end{remark}

\bigskip

\noindent {\bf Acknowledgments.} This project was initiated during the Thematic Program on Variational Problems in Physics, Economics and Geometry at the Fields Institute for Research in Mathematical Sciences. The authors would like to thank the organizers and the Institute for their hospitality and support during their visits. The authors would also like to thank Andrea Bertozzi, Eric Carlen, Andres Contreras, Inwon Kim, and Xavier Lamy for useful discussions, along with the anonymous reviewers for their detailed reading and valuable comments.

\bibliographystyle{siam}
\bibliography{bibl_CrTo}

\end{document}